\documentclass[a4paper, reqno,11pt]{amsart}
\usepackage[utf8]{inputenc}
\usepackage[T1]{fontenc}
\usepackage[english]{babel}
\usepackage{todonotes}
\usepackage{babelbib}
\usepackage{amsmath}
\usepackage{amsfonts}
\usepackage{amssymb}
\usepackage{amsthm}
\usepackage{color}
\usepackage{amsthm}
\usepackage{graphicx}
\usepackage{hyperref}
\usepackage{textcomp}
\usepackage{array}
\usepackage{pgf}
\usepackage{tikz}
\usetikzlibrary{matrix,arrows}
\usepackage{mathrsfs}
\usepackage{bbm,yhmath,stmaryrd}
\usepackage{geometry}
\geometry{hmargin=3cm,vmargin=3cm}
\newtheorem{defn}{Definition}

\newtheorem{theo}{Theorem}
\newtheorem*{theoA}{Theorem A}
\newtheorem*{theoB}{Theorem B}

\newtheorem{prop}{Proposition}
\newtheorem{lem}{Lemma}

\newtheorem{ex}{Example}
\newtheorem{qu}{Question}

\newtheorem{rem}{Remark}
\numberwithin{theo}{section}
\numberwithin{defn}{section}
\numberwithin{conj}{section}
\numberwithin{prop}{section}
\numberwithin{lem}{section}
\numberwithin{defprop}{section}
\numberwithin{cor}{section}
\numberwithin{qu}{section}
\numberwithin{rem}{section}
\numberwithin{ex}{section}

\newcommand{\ddbar}{\partial\bar{\partial}}
\newcommand \Z{\mathbb Z}
\newcommand \N{\mathbb N}
\newcommand \C{\mathbb C}
\newcommand \R{\mathbb R}
\newcommand \Q{\mathbb Q}
\newcommand \gO {\mathcal O}

\newcommand \M {\mathscr {M}}
\newcommand \Ld {\mathscr {L}}

\newcommand \de {\Delta}
\newcommand \U {\mathscr U}
\newcommand \CP {\mathbb P}
\newcommand \A {\mathbb A}

\newcommand \Sk {\text{Sk}}

\newcommand \X {\mathscr{X}}

\newcommand \cD {\mathscr{D}}

\newcommand \fl {\longrightarrow}
\newcommand \eps {\varepsilon}

\DeclareMathOperator{\Sp}{Spec}
\DeclareMathOperator{\Proj}{Proj}

\DeclareMathOperator{\ord}{ord}

\DeclareMathOperator{\Ker}{Ker}
\DeclareMathOperator{\Vol}{Vol}

\DeclareMathOperator \Id {Id}

\DeclareMathOperator{\PSH}{PSH}
\DeclareMathOperator{\Ric}{Ric}

\DeclareMathOperator{\Spec}{Spec}

\DeclareMathOperator{\Log}{Log}
\DeclareMathOperator{\Sing}{Sing}
\DeclareMathOperator{\MA}{MA}
\DeclareMathOperator{\val}{val}
\DeclareMathOperator{\Aut}{Aut}
\title{Hybrid convergence of Kähler-Einstein measures}
\author{Léonard Pille-Schneider}
\begin{document}
\date{}
\nocite{*}
\maketitle
\begin{abstract}
We compute the hybrid limit (in the sense of Boucksom-Jonsson) of the family of Kähler-Einstein volume forms on a degeneration of canonically polarized manifolds.
\\The limit measure is a weighted sum of Dirac masses at divisorial valuations, determined by the natural algebro-geometric limit of the family.
\\We also make some remarks on the non-archimedean Monge-Ampère operator and hybrid continuity of Kähler-Einstein potentials in this context.
\end{abstract}
\tableofcontents
\begin{section}*{Introduction}
Let $X \xrightarrow{\pi} \de^*$ be a holomorphic family of $n$-dimensional compact Kähler-Einstein manifolds, where $\de^* =\{ \lvert t \rvert <1 \}$ is the punctured unit disk in $\C$. We moreover assume that the family $X$ has a meromorphic singularity at $t=0$, in the sense that there exists a normal \emph{model} $\X \rightarrow \de$ of $X$. By definition, this is a normal complex analytic space $\X$ together with a flat, proper holomorphic map $\pi : \X \fl \de$, such that $X = \pi^{-1}(\de^*)$. By Hironaka's theorem, up to a resolution of singularities, we can always assume that the model is simple normal crossing: the irreducible components of the central fiber are smooth and intersect each other transversally.
\\For simplicity, we will also assume that the family has semi-stable reduction, which means that we can find a simple normal crossing model with reduced central fiber - this condition can always be achieved after a finite base change $t \mapsto t^m$ by the semi-stable reduction theorem \cite{KKMSD}.
\\The Kähler-Einstein assumption means that each fiber $X_t$ carries a Kähler-Einstein metric, unique in its Kähler class, satisfying the complex Monge-Ampère equation :
$$\Ric (\omega_t) = \lambda \omega_t,$$
with $\Ric(\omega_t) = -\frac{i}{2 \pi} \ddbar \log \omega_t^n$, and with Einstein constant $\lambda \in \{ 0, \pm 1\}$ (up to rescaling the metric). If $\lambda = \pm 1$, the Kähler class $[\omega_t] \in H^2(X_t, \Z)$ is determined by $X$, given by $\lambda c_1(X_t)$, and if $\lambda =0$, we will furthermore assume that our family is relatively polarized by a line bundle $L$, so that in any case $\omega_t \in c_1(L_t)$, with $L = -\lambda K_{X/\de^*}$ when $\lambda = \pm 1$, ensuring the uniqueness of $\omega_t$ (up to isometry when $\lambda =1$).
\\It is a classical problem in Kähler geometry to understand the limiting behaviour of these metrics as $t$ approaches $0$, and in particular to relate the metric limit to the degeneration of the complex structure on $X_t$, and to the birational geometry of models of $X$. 
\\This problem appears to be particularly involved in the Calabi-Yau case, where $\lambda =0$. In this case, the diameters of the metric spaces $(X_t, g_{\omega_t})$ may blow up: they stay bounded if and only if the family can be filled to a normal family $\X /\de$ such that the central fiber $\X_0$ is a mildy singular (canonical) Calabi-Yau variety, see for instance \cite{To}. The volume of the metric being cohomological and thus constant, in general some collapsing occurs and it is necessary to renormalize the metrics if one wishes to find a Gromov-Hausdorff limit of the fibers - which is no longer an algebro-geometric object.
\\A similar, and related question is to understand the limiting behaviour of the Kähler-Einstein \emph{measures} $\mu_t =\omega_t^n$ on $X_t$ (which have constant mass $L_t^n$). This question is of particular interest in the Calabi-Yau case, where $\mu_t$ turns out to be, up to a scaling factor, of algebraic nature. Indeed, if we fix a holomorphic trivialisation $\Omega$ of the relative canonical bundle, the Ricci-flatness equation can be rewritten as:
$$\omega_t^n = C_t i^{n^2} \Omega_t \wedge \bar{\Omega}_t,$$
where we denote by $\Omega_t$ the fiberwise restriction of $\Omega$, and $C_t$ a positive constant making the integrals of the two sides match.
In this set-up, understanding the limiting behaviour of the measures $\mu_t$ seems to be a natural first step for understanding the more involved behaviour of the metrics.
\\In this paper, we will be interested in the convergence of the Kähler-Einstein measures in the \emph{hybrid} set-up, which we will now describe.
\\The family $X/ \de^*$ being projective, one can canonically associate to $X$ a locally compact topological space $X^{\text{hyb}} \xrightarrow{\pi} \de$ mapping to the disk, called the \emph{hybrid space}, whose restriction to the punctured disk is the complex analytic space $X$, and whose central fiber $X^{\text{hyb}}_0$ is the non-archimedean Berkovich space $X^{\text{an}}$ associated to the base change of $X$ to the non-archimedean field $K = \C((t))$. This hybrid space, whose construction goes back in spirit to Berkovich, was defined in \cite{BJ} and provides a natural way to study degenerations of measures on a degeneration of complex manifolds, as it provides a canonical fill-in of the family $X/ \de^*$, whose central fiber has a reasonable topology. Convergence of measures coming from holomorphic dynamical systems has for instance been studied in the same set-up in \cite{Fav}.
\\In the Calabi-Yau case, this hybrid space is used by Boucksom-Jonsson \cite{BJ} to prove that the Kähler-Einstein measures (as mentioned above, those are nothing but a rescaling of $i^{n^2} \Omega_t \wedge \bar{\Omega}_t$ here) converge weakly on $X^{\text{hyb}}$ to a measure supported on $X^{\text{an}}$, which is a Lebesgue-type measure on a canonical simplicial complex $\Sk(X) \subset X^{\text{an}}$ called the \emph{essential skeleton}, and thus provides in a natural sense a \emph{tropical} limit of the $\mu_t$'s -  see section 1.2 for a precise statement.
\\The purpose of this note is to provide a similar result for degenerations of Kähler-Einstein manifolds with negative Ricci curvature. In this case, the limiting behaviour of the Kähler-Einstein metrics is fairly well-understood - this goes back to Tian \cite{T} under certain technical assumptions, with recent extension to the general case by Song \cite{S}. In fact, the metrics are non-collapsing as $t \rightarrow 0$ on a large region of the $X_t$'s, and the metric limit is actually closely related to the "algebro-geometric limit" of the $X_t$'s: indeed, in this case, the family $X/ \de^*$ admits a distinguished model $\X_c / \de$, whose central fiber matches the metric limit in a natural sense, and on which the Kähler-Einstein measures admit a weak limit in the complex analytic topology.
\\This in turn will imply that the tropical limiting behaviour of the Kähler-Einstein is of very different nature than in the Calabi-Yau case:
\begin{theoA}
Let $X \fl \de^*$ be a meromorphic degeneration of $n$-dimensional canonically polarized manifolds, and let $\omega_t \in -c_1(X_t)$ be the unique Kähler-Einstein metric with negative curvature on $X_t$. We furthermore assume that the family has semi-stable reduction.
\\The Kähler-Einstein measures $\mu_t = \omega_t^n$ converge weakly on $X^{\emph{hyb}}$ to a finite sum $\mu_0$ of Dirac masses supported at the divisorial valuations $v_{D_i}$ corresponding to the irreducible components $(D_i)_{i \in I}$ of the canonical model $\X_c$ of $X$, in the sense of the Minimal Model Program.
\\More precisely, we have :
$$ \mu_0 = \sum_{i \in I} (( K_{\X_c})^n \cdot D_i) \delta_{v_i}.$$
Moreover, $\mu_0 = \MA( \phi_{K_{\X_c}})$ is the non-archimedean Monge-Ampère measure associated to the model metric induced by the line bundle $K_{\X_c}$ on $\X_c$ (see section 1.1 for definitions).
\end{theoA}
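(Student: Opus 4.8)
The plan is to reduce the hybrid convergence on $X^{\text{hyb}}$ to the convergence of the measures $\mu_t$ in the \emph{complex-analytic} topology of the canonical model $\X_c \to \de$, and then to transport this statement to $X^{\text{an}}$ through the reduction map attached to $\X_c$ (passing if necessary to an SNC model dominating it). The analytic input is the non-collapsing theorem of Tian and Song: because the fibers $X_t$ are canonically polarized, the Kähler-Einstein measures $\mu_t$ extend across $t=0$ on $\X_c$ and converge weakly, in the complex topology, to a measure $\nu$ carried by the central fiber $\X_{c,0} = \sum_{i\in I} D_i$. On the smooth locus $D_i^\circ$ of each component, $\nu$ restricts to the Kähler-Einstein volume form of the log canonical pair $(D_i, \Delta_i)$, where $\Delta_i = \sum_{j\ne i} D_i \cap D_j$ is the double locus cut out on $D_i$.

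First I would compute the total mass that $\nu$ assigns to each component. Since the central fiber is reduced and linearly equivalent to $0$ on $\X_c$, adjunction gives $(K_{\X_c})|_{D_i} = K_{D_i} + \Delta_i$, and the normalisation $\Ric(\omega_t) = -\omega_t$ makes the Kähler-Einstein volume equal to the self-intersection of the log canonical class, so that
$$ \nu(D_i) = (K_{D_i} + \Delta_i)^n = (K_{\X_c})^n \cdot D_i. $$
Summing over $i\in I$ yields $(K_{\X_c})^n \cdot \X_{c,0} = L_t^n$, i.e. the entire mass survives in the limit. This absence of mass loss is precisely what distinguishes the canonically polarized case from the Calabi-Yau one, and it is what forces the hybrid limit to be purely atomic rather than diffuse along the skeleton.

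The heart of the matter is to transfer this picture to the hybrid space, and this is also where I expect the main difficulty. I would test $\mu_t$ against an arbitrary continuous function $f$ on $X^{\text{hyb}}$ and use the defining feature of the hybrid topology: a family $x_t \in X_t$ whose limit in the complex topology of $\X_c$ lies in an open stratum $D_i^\circ$ converges in $X^{\text{hyb}}$ to the divisorial valuation $v_{D_i}$, because the reduction map of $\X_c$ contracts $D_i^\circ$ onto $v_{D_i}$; hence $f|_{X_t}$ is uniformly close to the constant $f(v_{D_i})$ over $D_i^\circ$. Splitting $\int_{X_t} f\, d\mu_t$ into the contributions near each $D_i^\circ$ plus a remainder over the intersection strata $D_i\cap D_j$, and using the mass computation above, I would obtain $\int_{X_t} f\, d\mu_t \to \sum_{i\in I} \nu(D_i)\, f(v_{D_i}) = \int f\, d\mu_0$, provided the remainder is negligible. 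Controlling this remainder is the real obstacle: over the double locus $f$ need not be close to any single $f(v_{D_i})$, and a priori the hybrid images of such points could spread onto the edges of the skeleton joining $v_{D_i}$ to $v_{D_j}$, producing a diffuse contribution. The no-mass-loss identity is the lever here: since $\sum_i \nu(D_i)$ already exhausts the full mass $L_t^n$, the mass carried by shrinking tubular neighborhoods of the strata of codimension $\ge 2$ must tend to $0$, and I would make this quantitative using the uniform bounds on the Kähler-Einstein potentials on $\X_c$ furnished by Tian and Song together with a volume estimate for these neighborhoods.

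Finally, the identification $\mu_0 = \MA(\phi_{K_{\X_c}})$ is essentially formal. For the model metric induced by a relatively semiample line bundle on a semi-stable model with reduced central fiber, the non-archimedean (Chambert-Loir) Monge-Ampère measure is atomic, supported on the divisorial points $v_{D_i}$ with masses given by the corresponding intersection numbers, all fiber multiplicities being equal to $1$. Applied to $K_{\X_c}$, which is relatively ample on the canonical model, this yields $\sum_{i\in I}\big((K_{\X_c})^n \cdot D_i\big)\,\delta_{v_{D_i}}$, which is exactly the measure $\mu_0$ computed above, completing the identification.
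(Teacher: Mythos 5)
Your overall strategy coincides with the paper's: it deduces Theorem A from the archimedean convergence $\mu_t \to \omega_{KE}^n$ on the canonical model $\X_c$ (Song's estimates), computes the limiting masses by adjunction, and identifies the limit with $\MA(\phi_{K_{\X_c}})$ formally from the definition of the Monge--Ampère measure of a model metric. Those three ingredients are essentially correct as you state them, with one technical caveat: since the components $D_i$ of $\X_{c,0}$ need not be normal, the adjunction computation should be run on the normalization $\X_{c,0}^{\nu} = \sqcup_{i \in I}(\tilde{D}_i, C_i)$, where the Berman--Guenancia construction gives $\int_{D_i}\omega_{KE}^n = (K_{\tilde{D}_i}+C_i)^n = (K_{\X_c})^n \cdot D_i$ by the projection formula; this is how the paper argues.

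The genuine gap is in your transfer step, precisely at the claim that a family $x_t \in X_t$ converging in the complex topology of $\X_c$ to a point of $D_i^{\circ}$ converges in $X^{\text{hyb}}$ to $v_{D_i}$, so that a continuous $f$ on $X^{\text{hyb}}$ is uniformly close to $f(v_{D_i})$ on the region of $X_t$ near $D_i^{\circ}$. This is false. Take $X = C \times \de^*$ with $C$ a curve of genus $\ge 2$, so that $\X_c = C \times \de$ and $D_1 = C \times \{0\}$ is smooth and irreducible; the family $x_t = (q_0, t)$, with $q_0 \in C$ fixed, converges in $\X_c$ to a point of $D_1^{\circ}$, but in $X^{\text{hyb}}$ it converges to the rigid point of $C_K^{\text{an}}$ defined by the constant section: indeed, for any function $f$ pulled back from $C$ and vanishing at $q_0$ one has $\lvert f \rvert_{x_t} = 0$ for all $t$, whereas $\lvert f \rvert_{v_{D_1}} = 1$. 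The reason is that $X^{\text{hyb}} \simeq \varprojlim_{\X} \X^{\text{hyb}}$ carries the inverse limit topology over \emph{all} snc models, so it is strictly finer than anything seen by $\X_c$ alone: a test function pulled back from $(\X')^{\text{hyb}}$, where $\X'$ blows up a center inside $D_i^{\circ}$, is not approximately constant on the region of $X_t$ near $D_i^{\circ}$. Consequently the error term in your splitting is not confined to tubular neighbourhoods of the codimension $\ge 2$ strata of $\X_{c,0}$, and the no-mass-loss identity applied only to those neighbourhoods cannot close the argument. The repair is exactly the paper's proof of Theorem B: snc models $\X$ dominating $\X_c$ are cofinal (resolve the closure of the graph of $\X \dashrightarrow \X_c$), so it suffices to prove convergence of the pushed-forward measures on each fixed $\X^{\text{hyb}}$; at that finite level the Log maps genuinely contract the regions near the open strata of the strict transforms $D_i'$ onto the vertices $v_{D_i}$ (this is the correct version of your claim), the archimedean input becomes the weak convergence $\mu_t \to h^*\omega_{KE}^n$ on $\X$, a measure charging neither the exceptional divisors of $h : \X \fl \X_c$ nor the double locus, and then your mass bookkeeping (any subsequential limit dominates $\sum_{i \in I}((K_{\X_c})^n \cdot D_i)\,\delta_{v_{D_i}}$, and total masses agree) finishes the proof at each finite level, hence in the inverse limit.
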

For instance, if $X$ is a degeneration of curves of genus $g \ge 2$, which we assume to have semi-stable reduction, then the limiting measure is naturally supported on the vertex set of the dual graph $\Gamma$ of the special fiber $\X_0 = \sum_{i \in I} C_i$ of the minimal model $\X/ \de$. By adjunction, $\mu_0$ puts a mass $\mu_0(v_i) = 2 g(C_i) -2 + \val(v_i)$ on the vertex $v_i$, of valence $\val(v_i)$. The measure $\mu_0$ is thus the Dirac mass along the canonical divisor $K_{(\Gamma, g)}$ of the genus-augmented graph $(\Gamma, g : v_i \mapsto g(C_i))$, as defined in \cite{ABBR}.
\\The above theorem will follow from the following more general statement about hybrid convergence of measures, in the case where the measures have a "nice" non-collapsed archimedean limit:
\begin{theoB}
Let $X \xrightarrow{\pi} \de^*$ be a projective meromorphic degeneration of compact complex manifolds, and $(\mu_t)_{t \in \de^*}$ a family of probability measures on $X_t$.
\\Assume that there exists a normal model $\X \xrightarrow{\pi} \de$ of $X$, with reduced central fiber $\X_{0} = \sum_{i \in I} D_i$, such that the family of measures $(\mu_t)_{t \in \de^*}$ converges weakly on $\X$ to a measure $\mu_{\infty}$ supported on $\X_{0}$, which does not charge mass on the singular locus of $\X_{0}$.
\\Then the family of measures $\mu_t$ converge weakly on $X^{\emph{hyb}}$ to a finite sum $\mu_0$ of Dirac masses:
$$\mu_0 = \sum_{i \in I} \bigg( \int_{D^o_i} \mu_{\infty} \bigg) \delta_{v_i},$$
where the $v_i$'s are the divisorial valuations corresponding to the $D_i$'s.
\end{theoB}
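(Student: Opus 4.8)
The plan is to deduce the hybrid convergence from the complex-analytic convergence on the model $\X$ by translating, in local coordinates, proximity to a stratum $D_i^o$ into convergence of the associated Berkovich seminorm. Since $\X_0$ is reduced, near a point $p \in D_i^o$ one can choose coordinates $(z_0, \dots, z_n)$ on $\X$ with $\pi = z_0$ and $D_i = \{z_0 = 0\}$, so that every $x \in X_t$ automatically satisfies $z_0(x) = t$. For a local holomorphic $g$ with $\mathrm{ord}_{D_i}(g) = k$ one has $g = z_0^k u$ with $u$ a unit, and the hybrid structure attaches to $x$ the ratios $\log|g(x)| / \log|t|$, whose limits as $t \to 0$ record the limiting Berkovich valuation. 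Reducedness forces $v_i(t) = 1$, so no rescaling is needed and the vertex $v_i = \mathrm{ord}_{D_i}$ is characterized by $v_i(g) = k$ for all such $g$. I would first make this into a clean local statement identifying the hybrid topology near $v_i$.

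By compactness of the space of probability measures on the hybrid space over a closed subdisk, it suffices to show that every subsequential hybrid limit $\nu$ of the $\mu_t$ equals $\mu_0$. The robust part of the argument, using only weak convergence on $\X$ together with $\mu_\infty(\mathrm{Sing}\,\X_0) = 0$, computes the image of $\nu$ under the retraction $r_{\X} \colon X^{\mathrm{an}} \to \Sk(\X)$: the $\mu_t$-mass accumulating near a stratum $D_i^o$ has total $\mu_\infty(D_i^o)$ and reduces into $D_i^o$, hence retracts to the single vertex $v_i$, giving
\[ (r_{\X})_* \nu = \sum_{i \in I} \mu_\infty(D_i^o)\, \delta_{v_i} = \mu_0. \]
This already pins down $\nu$ up to its distribution inside the fibers $r_{\X}^{-1}(v_i)$. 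The remaining point is a dichotomy: a sequence $x_t \to p \in D_i^o$ has hybrid limit exactly $v_i$ when the approach is \emph{radial}, i.e.\ $\log|g(x_t)|/\log|t| \to 0$ for every $g$ with $\mathrm{ord}_{D_i}(g) = 0$, whereas a tangential approach with $|g(x_t)| \asymp |t|^s$, $s > 0$, produces a monomial valuation lying in $r_{\X}^{-1}(v_i) \setminus \{v_i\}$.

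To upgrade $(r_{\X})_*\nu = \mu_0$ to $\nu = \mu_0$, I would test against a continuous function $f$ on $X^{\mathrm{hyb}}$ by a covering argument. Fix $\varepsilon > 0$; using $\mu_\infty(\mathrm{Sing}\,\X_0)=0$ and weak convergence on $\X$, cover a $\mu_\infty$-conull part of each $D_i^o$ by finitely many small coordinate neighborhoods $U_{i,\alpha}$ whose closures avoid $\mathrm{Sing}\,\X_0$ and which are $\mu_\infty$-continuity sets. The radiality estimate places, for $|t|$ small, the hybrid images of $U_{i,\alpha} \cap X_t$ in a hybrid neighborhood of $v_i$ on which $f$ is within $\varepsilon$ of $f(v_i)$, so that
\[ \int_{U_{i,\alpha} \cap X_t} f\, d\mu_t = f(v_i)\, \mu_t(U_{i,\alpha}) + O(\varepsilon). \]
Since $\mu_t(U_{i,\alpha}) \to \mu_\infty(U_{i,\alpha})$ and total mass is preserved, summing over the cover, letting $t \to 0$ and then $\varepsilon \to 0$ gives $\int f\, d\mu_t \to \sum_i f(v_i)\,\mu_\infty(D_i^o) = \int f\, d\mu_0$, whence $\nu = \mu_0$. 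Identifying $\mu_0$ with $\MA(\phi_{K_{\X_c}})$ in Theorem A would then be a separate model-metric computation.

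The main obstacle is precisely the radiality estimate underlying the last display: weak convergence on $\X$ controls \emph{where} the mass of $\mu_t$ accumulates but not the \emph{rate} of approach, and a concentration at a scale comparable to a power of $t$ would leak mass onto the non-vertex points of $r_{\X}^{-1}(v_i)$, leaving only the weaker conclusion $(r_{\X})_*\nu = \mu_0$. Excluding such tangential concentration is where the strength of the convergence $\mu_t \to \mu_\infty$ must be used: in the intended applications $\mu_\infty$ is a fixed positive volume form on the strata and the convergence is locally uniform there, which forbids any polynomial-in-$t$ concentration length. Turning this into a uniform estimate, and verifying that the local-coordinate seminorm picture matches the global Boucksom--Jonsson construction of $X^{\mathrm{hyb}}$, is the technical heart of the proof.
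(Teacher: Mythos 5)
There is a genuine gap, and it is the one you flag yourself: everything you actually establish amounts to the identity $(\rho_{\X})_*\nu = \mu_0$ for the single given model $\X$, and the upgrade to $\nu = \mu_0$ rests entirely on the ``radiality estimate'', which you leave unproven. Your suspicion that it cannot be extracted from weak convergence alone is correct --- in fact it can fail under the literal hypotheses of Theorem B. Take $X = \CP^1 \times \de^*$, $\X = \CP^1 \times \de$, and $\mu_t = \delta_{\sigma(t)}$ for the constant section $\sigma(t) = (p,t)$ through a point $p \in \X_0$: then $\mu_t \to \delta_{(p,0)}$ weakly on $\X$, and $\delta_{(p,0)}$ charges no mass on $\Sing(\X_0) = \emptyset$, yet on $X^{\text{hyb}}$ the measures converge to the Dirac mass at the semivaluation $f \mapsto \ord_{t=0}(\sigma^* f)$, a point of $X^{\text{an}}$ distinct from the divisorial valuation $v_{\X_0}$ (it takes the value $+\infty$ on a local equation of the section). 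So the tangential/atomic ``leakage'' you worry about is a real phenomenon, no covering argument can exclude it in the stated generality, and your proposed remedy (locally uniform convergence of densities to a volume form) is precisely an additional hypothesis of the kind that is needed: it holds in Theorem A, where $\mu_\infty = \omega_{KE}^n$ charges no proper analytic subset of the smooth locus, but it is not part of Theorem B.

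For comparison, the paper never proves any rate estimate; it replaces your single-model radiality step by a structural argument over all models. Using $X^{\text{hyb}} \simeq \varprojlim_{\X'} (\X')^{\text{hyb}}$ together with the cofinality of snc models $\X'$ dominating $\X$ (obtained by resolving the closure of the graph of the rational map to $\X$), weak convergence on $X^{\text{hyb}}$ is reduced to weak convergence of the pushforwards on every $(\X')^{\text{hyb}}$, and on each fixed $\X'$ no rate information enters: for a chart $\U$ meeting only one component of $\X'_0$, the map $\Log_{\U}$ is \emph{constant} equal to the corresponding vertex, so $(\Log_{\U})_*(\eta \mu_t)$ is automatically a point mass whose weight is governed by the archimedean convergence on $\X'$ (the paper's Lemma 3.1); then any subsequential limit on $(\X')^{\text{hyb}}$ dominates $\sum_{i} \bigl( \int_{D_i} \mu_\infty \bigr) \delta_{v_i}$, and since this atomic measure already carries the full mass, equality follows. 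In other words, leakage to edges and higher faces is forbidden by mass bookkeeping, not by estimates. The cost is the input that $\mu_t$ converges weakly on \emph{every} dominating model $\X'$ to the extension by zero of the pullback $h^*\mu_\infty$, which the paper asserts to be immediate; as the example above shows, this is exactly where tangential or atomic concentration can still intervene, since it requires $\mu_\infty$ not to charge the non-isomorphism locus of $h : \X' \to \X$. So your diagnosis of the difficulty is accurate --- it even exposes a hypothesis left implicit in the theorem --- but your proof does not overcome it, whereas the paper's inverse-limit-plus-mass argument is the mechanism that replaces the estimate you could not supply.
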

This result will be proved using an alternative description of the space $X^{\text{hyb}}$ as a projective limit of smaller hybrid spaces associated to snc models of $X$, to be introduced in section 1.2, so that an understanding of the hybrid limiting behaviour of the measures will amount to an understanding of the limit of the measures on the tropicalisations of  $X$ associated to a fairly large class of models of $X$.
\\We refer to section 1.2 for a precise definition of the space $X^{\text{hyb}}$ and of the divisorial valuations involved in the statement of the theorems.
\\Let us briefly describe the organization of the paper. In section 1, we start by recalling the definition of hybrid spaces as introduced by Boucksom-Jonsson, and how their topology can be understood more explicitely in term of Berkovich skeleta.
\\In section 2, we collect various type of information about metric and algebro-geometric convergence of Kähler-Einstein manifolds, that we will use in section 3, where we deduce theorem A from theorem B, before proving theorem B.  We conclude adding some remarks about the non-archimedean Monge-Ampère equation, and discussing some possible extensions to the Kähler-Einstein-Fano case.
\subsubsection*{Conventions and notation}
All complex varieties here are taken with the Euclidean topology.
\\A \emph{pair} $(Z, D)$ consists of a normal complex variety $Z$, together with a Weil $\Q$-divisor $D = \sum_{i \in I} a_i D_i$, $a_i \ge 0$.
\\We refer to \cite{KM} for the standard definitions of log terminal and log canonical singularities for pairs. 
\\On a complex manifold $Z$ of dimension $n$, we set $dd^c = \frac{i}{2\pi} \ddbar$, and the Ricci curvature of a Kähler form $\omega$ is the $(1,1)$-form given locally by $\Ric(\omega) = - dd^c \log \omega^n \in c_1(Z)$.
\subsubsection*{Acknowledgements}
I am grateful to my advisor S. Boucksom for suggesting the question to me and for many helpful comments on the first version of this paper; and to the anonymous referee for his remarks.
\end{section}
\begin{section}{Non-archimedean geometry}
\subsection{Berkovich spaces and the non-archimedean Monge-Ampère operator}
Let $K = \C((t))$ be the field of complex Laurent series, and $X$ be a smooth variety over $K$. 
\\The field $K$ admits an absolute value, given for $f = \sum_{n \in \Z} a_n t^n$ by the formula $\lvert f \rvert = e^{-\text{ord}_0(f)}$, where $\text{ord}_0 (f) = \min \{ n \in \Z / a_n \neq 0 \}$. The valuation ring $\{ \ord_0 \ge 0 \}$ of $K$ is the ring of formal power series $R = \C[[t]]$, which is a discrete valuation ring with residue field $\C$.
\\This absolute value is furthermore \emph{non-archimedean}, in the sense that it satisfies the strong triangle inequality : $\lvert f+g \rvert \le \max{ \{ \lvert f \rvert, \lvert g \rvert \} }$.
\\Using this absolute value, one can now associate to $X$ an analytic space in the sense of Berkovich, which we write as $X^{\text{an}}$. Its underlying topological space satisfies nice properties: it is Hausdorff, locally connected, locally contractible, and is compact if and only $X$ is proper over $K$ - which we will assume from now on.
\\As a set, it consists of real-valued \emph{semi-valuations} on $X_K$, that is, pairs $x = (\xi_x, v_x)$, where $\xi_x$ is a scheme-theoretic point of $X_K$ and $v_x$ is  a valuation on the residue field $K(\xi_x)$ at $\xi_x$, i.e. the field of rational functions of the subvariety $\overline{\{ \xi_x \}}$, extending the valuation $\text{ord}_0$ on $K$. For $x \in X^{\text{an}}$, we will denote $K^{\circ}_x$ the valuation ring $\{ v_x \ge 0 \} \subset K(\xi_x)$ of $x$. 
\\The topology on $X^{\text{an}}$ is now defined as the coarsest making the maps $f \mapsto v_x(f)$ continuous with respect to $x$ for any regular function $f$ on a Zariski open subset of $X$, as well as the forgetful map $X^{\text{an}} \fl X_K$ sending a valuation to the generic point $\xi_x$ corresponding to its field of definition. The space $X^{\text{an}}$ also comes with a structure sheaf which we will not define, as we will not need it here.
\\If $\X$ is a proper model of $X$ over $R$, given a semi-valuation $v_x \in X^{\text{an}}$, the canonical map $\Spec K(\xi_x) \fl X$ extends in a unique way to a map $\Spec K^{\circ}_x \fl \X$, by the valuative criterion of properness. The image of the closed point of $\Spec K^{\circ}_x$ is a scheme-theoretic point of $\X_0$, because of the condition $v_x(t) =1$, and is called the \emph{center} $c_{\X}(x)$ of $x$ on $\X$.
\\Let $L$ be an ample line bundle on $X$. A model of $(X,L)$ is a pair $(\X, \Ld)$ where $\X$ is an snc model of $X$ and $\Ld$ a line bundle on $\X/R$, extending $L$.
\\ Just as the complex analytic case, a continuous metric $\phi$ on $L^{\text{an}}$ is the data, for each local section $s$ of $L$ on a Zariski open subset $U$ of $X$, of a continuous positive function $\lvert s \rvert_{\phi}$ on $U^{\text{an}}$, compatible with restrictions and such that $\lvert fs\rvert = \lvert f \rvert \lvert s \rvert$ for $f \in \gO_X$. Our notations are chosen so that one should think of $\phi$ as the local logarithmic weight of the metric, i.e. $\lvert s \rvert_{\phi} = \lvert s \rvert e^{-\phi}$ in local coordinates, so that the complex analytic first Chern form of the metric locally reads $dd^c \phi$.
\\The most basic examples of such metrics are model metrics:
\begin{defn}
Let $(\X, \Ld)$ be a proper $R$-model of $(X, L)$. The associated model metric $\phi_{\Ld}$ on $L^{\emph{an}}$ is defined by the following property : $\lvert s \rvert_{\phi_{\Ld}} (x) \le 1$ if and only if $s$ extends as a regular section of $\Ld$ at $c_{\X}(v_x)$.
\\It can be computed as follows: if $s_0$ is a trivializing section of $\Ld$ at $c_{\X}(v_x)$, then $\log \lvert s \rvert_{\phi}(x) = -v_x(s/s_0)$.
\end{defn}
If $(\X, \Ld)$ and $(\X', \Ld')$ are two models of $L$, then the model metrics $\phi_{\Ld}$ and $\phi_{\Ld'}$ coincide if and only if there exists a model $\X''$ of $X$ (which can be taken to be snc), dominating both $\X$ and $\X'$, such that the pullbacks of $\Ld$ and $\Ld'$ to $\X''$ coincide.
\\If $\Ld$ is an ample model of $L$, then the above metric $\phi_{\Ld}$ is nothing but $k^{-1}$ times the pullback of the non-archimedean Fubini-Study metric on $\gO(1)$ on $\CP^N$, via an embedding $\phi : \X \hookrightarrow \CP^N_R$ by sections of $\Ld^k$. This metric on the line bundle $\gO(1)$ on $\CP^{N,an}_K$ can be easily described: a section $s \in H^0(U, \gO(1))$ is the data of a linear form $e(x)$ on $l_x$ for each closed point $x \in U$, and the non-archimedean Fubini-Study metric is given by the formula: 
$$\lvert e(x) \rvert_{FS} = \max_{i = 1,...,N} \lvert e(X_i) \rvert_x,$$ 
the $X_i$'s being standard homogeneous coordinates on $\CP^N$, so that the potential for $\theta$ on the standard chart $X_i \neq 0$ is simply $\phi(x) = \log \max_{j \neq i} \frac{\lvert X_j \rvert_x}{\lvert X_i \rvert_x}$.
\\As in the complex analytic case, the metric $\phi$ provides us with a closed (1,1) form $\omega_{\phi}$ written formally $ \omega_{\phi} = dd^c \phi$, which we call the \emph{curvature form} of the metric, which has cohomology class equal to $c_1(L)$ (see \cite{BFJ1} for definition of cohomology classes on $X^{\text{an}}$). If $\phi'$ is another metric on $L$, then $\psi = \phi' -\phi$ is a continuous function on $X^{\text{an}}$, and we have $\omega_{\phi'} = \omega_{\phi} + dd^c \psi$.
\\If $\omega$ is a semi-positive $(1,1)$-form on a complex manifold, the associated Monge-Ampère measure is by definition $\omega^n$, the maximal wedge power of $\omega$. One can thus define the Monge-Ampère measure of a metric $\phi$, by the formula $\MA(\phi) = (dd^c \phi)^n$. This is a positive measure, of total mass $L^n$. 
\\In the non-archimedean setting, it is also possible to associate a measure to a non-archimedean metric on an ample line bundle, as follows:
\begin{defn}
Let $\phi = \phi_{\Ld}$ be the model metric associated to $(\X, \Ld)$, where $\X_0 = \sum_{i \in I} m_i D_i$. Its Monge-Ampère measure is defined by the following formula:
$$ \MA(\phi_{\Ld}) = \sum_{i \in I} m_i(\Ld^n \cdot D_i) \delta_{v_i}.$$
\end{defn}
Even if this may not be transparent at first sight, the definition is designed to mimic the real Monge-Ampère operator on piecewise affine convex functions, and yields on operator that is expected to be related to the latter operator on skeleta $\Sk(\X)$ of snc models of $X$. 
\\The Monge-Ampère can then be extended to semi-positive metrics on $X^{\text{an}}$, using density of model metrics (and even those of metrics of \emph{finite energy}, see \cite{BFJ1}). This allows one to define the Monge-Ampère measure $\MA(\phi)$, for any semi-positive metric $\phi$ on $L$.
\begin{rem}
It is actually possible to give a meaning to semi-positive currents of arbitrary degree - such as $dd^c \phi$ - and under some assumptions, their various wedge powers on Berkovich spaces over $K$, using the theory developed in \cite{CLD}, which provides another construction of the Monge-Ampère measure associated to a continuous semi-positive metric.
\end{rem}
The following analog of the Calabi-Yau theorem holds in this non-archimedean setting:
\begin{theo}{\cite[theo. A]{BFJ1}, \cite[theo. D]{BGJKM}}
\\Let $X \fl \de^*$ be an meromorphic degeneration of projective complex manifolds, polarized by a relatively ample line bundle $L$.
\\Let $\mu$ be a positive Radon measure on the Berkovich analytification $X^{\emph{an}}$, supported on the skeleton $\emph{Sk}(\X)$ of some snc model $\X$ of $X$ (see section 1.2 for the definition), and with total mass $\mu(X^{\emph{an}}) = L^n$.
\\Then there exists a unique semi-positive metric on $L^{\emph{an}}$, satisfying the non-archimedean Monge-Ampère equation on $X^{\emph{an}}$:
$$ \MA(\phi) = \mu.$$
\end{theo}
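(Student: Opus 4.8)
The plan is to follow the variational strategy of Boucksom--Favre--Jonsson, which transposes the Berman--Boucksom--Guedj--Zeriahi approach from the complex to the non-archimedean setting. First I would introduce the Monge--Ampère energy $E(\phi)$, a primitive of the Monge--Ampère operator in the sense that along segments of semi-positive metrics one has
$$ \tfrac{d}{ds}\, E(\phi + s\psi) = \int_{X^{\mathrm{an}}} \psi \, \MA(\phi + s\psi), $$
together with the functional
$$ F_\mu(\phi) = E(\phi) - \int_{X^{\mathrm{an}}} \phi \, d\mu $$
on the space $\mathcal{E}^1(L)$ of semi-positive metrics of finite energy. The formal observation driving the proof is that, since $E$ is concave along such segments, a semi-positive metric $\phi$ solves $\MA(\phi) = \mu$ exactly when it is a critical point, hence a maximizer, of $F_\mu$ over $\mathcal{E}^1(L)$. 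The argument then splits into producing a maximizer and showing that it genuinely solves the Euler--Lagrange equation.

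For existence I would run the direct method of the calculus of variations, working modulo additive constants (to which the equation is insensitive) and normalizing by, say, $\sup_{X^{\mathrm{an}}} \phi = 0$. The hypothesis that $\mu$ is a Radon measure supported on a skeleton $\Sk(\X)$ with total mass $L^n$ is precisely what makes the linear term $\phi \mapsto \int \phi \, d\mu$ bounded and continuous on finite-energy metrics, so that $F_\mu$ is well-defined and finite there. I would then verify that $F_\mu$ is upper semi-continuous and coercive for the strong topology on $\mathcal{E}^1(L)$, and that its superlevel sets $\{F_\mu \ge c\}$ are compact; the direct method then yields a maximizer $\phi \in \mathcal{E}^1(L)$.

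The heart of the proof, and the step I expect to be the main obstacle, is showing that the maximizer solves the equation, since one cannot perturb $\phi$ by an arbitrary continuous function $f$ and stay semi-positive. The remedy is to introduce the envelope operator $P(\psi) = \sup\{ \chi \le \psi : \chi \text{ semi-positive}\}$ and to test against $P(\phi + f)$. Because $P(\phi+f) \le \phi + f$ and $\mu \ge 0$, the function $g(f) = E(P(\phi+f)) - \int (\phi + f)\, d\mu$ is dominated by $F_\mu(P(\phi+f)) \le F_\mu(\phi) = g(0)$, so $f = 0$ maximizes $g$; differentiating at $f=0$ (and using $P(\phi) = \phi$) gives $\int f\,\MA(\phi) = \int f \, d\mu$ for all $f$, whence $\MA(\phi) = \mu$. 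The entire difficulty is thus concentrated in the required differentiability of $f \mapsto E\bigl(P(\phi + f)\bigr)$, which rests on two hard inputs: the continuity of the envelope $P(\psi)$ for continuous $\psi$, and the differentiability of the non-archimedean volume function along model metrics. Both crucially exploit resolution of singularities and semistable reduction over $K = \C((t))$ (equivalently, residue characteristic zero), and constitute the genuine technical content supplied by \cite{BFJ1} and extended in \cite{BGJKM}.

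Finally, uniqueness up to an additive constant would follow from the domination principle for the non-archimedean Monge--Ampère operator: if $\phi, \psi \in \mathcal{E}^1(L)$ and $\phi \le \psi$ holds $\MA(\phi)$-almost everywhere, then $\phi \le \psi$ everywhere. Applying this symmetrically to two solutions of $\MA(\phi) = \MA(\psi) = \mu$ forces $\phi - \psi$ to be constant. The domination principle itself is a consequence of the concavity of $E$ and the comparison inequalities for $\MA$ set up in the first step, so that no further essentially new analytic input is needed once the differentiability of $E \circ P$ is in hand.
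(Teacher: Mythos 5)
The paper itself contains no proof of this statement: it is imported wholesale from \cite{BFJ1} (which assumes in addition that $X$ is defined over an algebraic curve) and \cite{BGJKM} (general discretely valued fields of residue characteristic zero), so your proposal can only be measured against the proofs in those references. On the existence side it is an accurate account of them. The variational scheme --- energy functional $E$, maximization of $F_\mu = E - \int \cdot \, d\mu$ over sup-normalized finite-energy metrics, and the Euler--Lagrange step via the envelope $P$ and the differentiability of $f \mapsto E(P(\phi+f))$ --- is exactly the strategy of \cite{BFJ1}, and you correctly locate the hard content in the continuity of envelopes and the differentiability of non-archimedean volumes; removing the reliance of the latter on the algebraicity hypothesis is precisely the contribution of \cite{BGJKM}. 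Two smaller caveats: uniqueness can only hold up to an additive constant (your formulation is the correct one; the statement as reproduced in the paper elides this), and the cited theorems produce a \emph{continuous} semipositive metric, so after the variational step one still needs a Ko{\l}odziej-type capacity/$L^\infty$ estimate to upgrade the finite-energy maximizer; your sketch stops at finite energy.

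The genuine gap is in your uniqueness paragraph. The domination principle cannot be ``applied symmetrically'' to two solutions: to invoke it you must already know that $\phi \le \psi$ holds $\MA(\phi)$-a.e., i.e.\ on the support of $\mu$, and for two arbitrary solutions you have no such pointwise information. What the principle actually yields, after adjusting constants (set $c = \sup_{\mathrm{supp}\,\mu}(\phi - \psi)$, so that $\phi \le \psi + c$ holds $\MA(\psi)$-a.e., hence everywhere, and symmetrically), is only that $\sup_X(\phi-\psi)$ and $\inf_X(\phi-\psi)$ are attained on $\mathrm{supp}\,\mu$; this does not force $\phi - \psi$ to be constant. Your closing claim that uniqueness requires ``no further essentially new analytic input'' is also historically and mathematically off: in \cite{BFJ1} uniqueness is not obtained by a domination argument but is quoted from Yuan--Zhang, whose proof is a non-archimedean Calabi--B{\l}ocki argument --- writing $f = \phi - \psi$, expanding $0 = \int f\,(\MA(\phi)-\MA(\psi))$ by integration by parts into a sum of terms $\int d f \wedge d^c f \wedge \omega_\phi^j \wedge \omega_\psi^{n-1-j}$, and then concluding that $f$ is constant --- where the last step rests on the (arithmetic) Hodge index theorem, i.e.\ a nondegeneracy statement for the non-archimedean energy pairing. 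That is a separate, genuinely nontrivial input, comparable in weight to the orthogonality property you highlight on the existence side, and it is missing from your sketch.
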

This theorem was first proved when $X$ is defined over an algebraic curve in \cite{BFJ1}, and then extended to varieties over non-archimedean fields of residual characteristic zero in \cite{BGJKM}.
\\The main motivation for such a result comes from a global motto that suggests that when $X$ comes from a degeneration of complex manifolds, the non-archimedean space $X^{\text{an}}$ should capture asymptotic data associated to $X$ near $t=0$: here the above Monge-Ampère equation should be thought as an asymptotic version of the Monge-Ampère equation on the complex analytic fibers. This can be made more rigorous using the formalism of hybrid spaces, which we will now explain.
\subsection{Hybrid spaces and skeleta}
Let us explain how the Berkovich construction can be generalized to schemes of finite type over a Banach ring, as this will allows us to define the hybrid spaces we are interested in. We refer to \cite[App. A]{BJ} and the references therein. 
\\Recall that a Banach ring $A$ is a commutative ring equipped with a submultiplicative norm $\lVert \cdot \rVert$ such that $A$ is complete with respect to $\lVert \cdot \rVert$. 
\\First, the Berkovich spectrum $\M(A)$ is the set whose points $x \in \M(A)$ are multiplicative semi-norms $\lvert \cdot \rvert_x : A \fl \R_{\ge 0}$ which satisfy $\lvert \cdot \rvert_x \le \lVert \cdot \rVert$. It is equipped with the topology of pointwise convergence on $A$, which makes it into a non-empty Hausdorff compact topological space, and with a map $\lvert \cdot \rvert_x \mapsto \mathfrak{p}_x = \Ker (\lvert \cdot \rvert_x)$ to $\Spec(A)$ which is continuous.
\\For example, assume that $A$ is a complex Banach algebra: the classical Gelfand-Mazur theorem implies that $\M(A)$ is nothing but the set of maximal ideals of $A$.
\\Now let $B$ be a finitely generated $A$-algebra. The analytification $Y^{\text{An}}$ of $Y =\Sp(B)$ is defined as the set of multiplicative semi-norms on $B$ whose restriction to $A$ belong to $\M(A)$. It is endowed with the coarsest topology making the maps $x \mapsto \lvert f \rvert_x$ continuous, for $f \in B$, and comes with a structure morphism $Y \fl \M(A)$ sending a semi-norm to its restriction to $A$.
\\If $X$ is a scheme of finite type over $A$, then one can glue the analytifications of affine charts of $X$ in order to obtain an \emph{analytification functor} $X \mapsto X^{\text{An}}$.
\begin{defn}
Let $\C^{\emph{hyb}}$ be the Banach algebra obtained by equipping the standard $\C$ with the norm $\lVert \cdot \rVert_{\emph{hyb}}$, defined for $z\neq 0$ by:
$$ \lVert z \rVert_{\emph{hyb}} = \max \{ 1 , \lvert z \rvert \},$$
where $\lvert z \rvert$ is the usual modulus of $z$.
\end{defn}
One can show that the elements of $\M(\C^{\text{hyb}})$ are of the form $\lvert \cdot \rvert^{\lambda}$, for $\lambda \in [0,1]$, where $\lvert \cdot \rvert^0 = \lvert \cdot \rvert_0$ denotes the \emph{trivial} absolute value on $\C$ - that is, the absolute value such that $\lvert z \rvert_0 =1$ for all $z \neq 0$, which is a \emph{non-archimedean} absolute value. This yields a homeomorphism $\lambda : \M(\C^{\text{hyb}}) \xrightarrow{\sim} [0,1]$.
\\Thus, if $Z$ is a scheme of finite type over $\C$, its analytification with respect to $\lvert \cdot \rvert_{\text{hyb}}$, which we denote by $Z^{\text{hyb}_{\C}}$, comes with a structure morphism $p : Z^{\text{hyb}_{\C}} \fl [0,1]$. If $X = \Spec(A)$ is affine, the fiber over $\lambda \neq 0$ is by definition of $p$ the set of semi-norms extending the absolute value $\lvert \cdot \rvert^{\lambda}$ on $\C$, so that by rescaling and by the Gelfand-Mazur theorem, this is easily seen to be homeomorphic to the holomorphic analytification $Z^{\text{hol}}$ of $Z$. One can actually show that we have a $p$-homeomorphism : 
$$p^{-1}((0,1]) \xrightarrow{\sim} (0,1] \times Z^{\text{hol}}.$$
\\On the other hand, the fiber $p^{-1}(0)$ consists of the semi-norms extending the trivial absolute value on $\C$, so that this is homeomorphic to the analytification $Z^{\text{an}}_0$ of $Z$ with respect to the trivial absolute value on $\C$.
\\Thus, the space $Z^{\text{hyb}_{\C}} $ provides a natural way to degenerate (topologically) the complex manifold $Z^{\text{hol}}$ to the non-archimedean analytic space $Z^{\text{an}}_0$.
\\Let us now perform a similar construction for schemes $X/K$ of finite type over $K = \C((t))$.
\\We fix a radius $r \in (0,1)$, and consider the following Banach ring:
$$A_r = \{ f = \sum_{n \in \Z} a_n t^n \in \C((t))  \; / \; \lVert f \rVert_{\text{hyb}} = \sum_{n} \lVert a_n \rVert_{\text{hyb}} \; r^n < \infty \}.$$
The purpose of the above Banach ring is to provide a presentation of the complex disk as an affine non-archimedean analytic space (at least topologically):
\begin{lem}{(\cite[prop. A.4]{BJ})}
\\The map $z \fl \lvert \cdot \rvert_z$ defined by:
\[ \lvert f \rvert_z = \left\{ \begin{array}{ll} r^{\ord_0(f)} & \mbox{if $z =0$},\\
\lvert f \rvert_z = r^{\log \lvert f(z) \rvert/ \log \rvert z \rvert} & \mbox{if $z \neq 0$}
\end{array} \right. \]
\\induces a homeomorphism from $\bar{\de}_r$ to $\M(A_r)$.
\end{lem}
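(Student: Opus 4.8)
The plan is to verify directly that the prescribed map $z \mapsto \lvert \cdot \rvert_z$ lands in $\M(A_r)$, is a bijection onto it, and is a homeomorphism, exploiting that $\bar{\de}_r$ is compact and $\M(A_r)$ is Hausdorff so that a continuous bijection automatically inverts continuously. First I would check that for each fixed $z \in \bar{\de}_r$ the assignment $f \mapsto \lvert f \rvert_z$ is a multiplicative semi-norm on $A_r$ bounded by $\lVert \cdot \rVert_{\text{hyb}}$, so that it indeed defines a point of $\M(A_r)$. For $z = 0$ this is the formula $\lvert f \rvert_0 = r^{\ord_0(f)}$, which is the (rescaled) $t$-adic absolute value and is visibly multiplicative and non-archimedean, with $\lvert f \rvert_0 \le \sum_n \lVert a_n \rVert_{\text{hyb}} r^n = \lVert f \rVert_{\text{hyb}}$ since the dominant term $r^{\ord_0(f)}$ is one of the summands and $\lVert a_n \rVert_{\text{hyb}} \ge 1$. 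For $z \neq 0$ the value is $\lvert f(z) \rvert$, interpreting $f$ as a convergent Laurent series at $z$ (here I would first argue that membership in $A_r$ forces absolute convergence of $\sum a_n z^n$ for $\lvert z \rvert \le r$, so $f(z)$ makes sense), and the exponent identity $r^{\log \lvert f(z)\rvert / \log \lvert z \rvert} = \lvert f(z)\rvert$ holds precisely when one normalizes so that $\lvert t \rvert_z = \lvert z\rvert$; multiplicativity and the archimedean (ordinary) triangle inequality are then inherited from the usual modulus on $\C$, and the semi-norm bound again follows from the triangle inequality applied to $\lvert f(z)\rvert \le \sum_n \lvert a_n \rvert \lvert z\rvert^n \le \sum_n \lVert a_n\rVert_{\text{hyb}} r^n$.

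Next I would establish continuity of $z \mapsto \lvert \cdot \rvert_z$ for the topology of $\M(A_r)$, which is that of pointwise convergence: I must show $z \mapsto \lvert f\rvert_z$ is continuous on $\bar{\de}_r$ for every fixed $f \in A_r$. Away from $z = 0$ this is clear from continuity of $z \mapsto f(z)$ and of the exponent, and the only delicate point is continuity \emph{at} $z = 0$, where one must check that $\lvert f(z)\rvert \to r^{\ord_0(f)}$ as $z \to 0$; writing $f(z) = z^{\ord_0(f)}(a_{\ord_0(f)} + o(1))$ and taking the limit in the exponential $r^{\log\lvert f(z)\rvert/\log\lvert z\rvert}$ gives exactly $r^{\ord_0(f)}$, the higher-order terms being killed because they contribute a vanishing amount to $\log\lvert f(z)\rvert/\log\lvert z\rvert$ as $\lvert z\rvert \to 0$.

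I would then argue injectivity and surjectivity. Injectivity is immediate: the semi-norms separate the points of $\bar{\de}_r$ because evaluating on the coordinate function $t$ recovers $\lvert z\rvert$, and evaluating on the function $z - c$ type elements pins down $z$ among points of equal modulus. For surjectivity, given $x \in \M(A_r)$ I would look at $\lvert t\rvert_x \in [0, r]$; if it is $0$ the restriction to $\C$ must be trivial and one identifies $x$ with the point $z = 0$, while if $\lvert t\rvert_x = \rho > 0$ the restriction of $x$ to $\C \subset A_r$ is a nontrivial archimedean absolute value hence (Gelfand–Mazur) the standard modulus, forcing $x$ to be evaluation at some complex number of modulus $\rho \le r$, i.e. some $z \in \bar{\de}_r$. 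The main obstacle I anticipate is precisely this surjectivity step: controlling which multiplicative semi-norms on the somewhat artificial Banach ring $A_r$ can occur requires knowing that the kernel/prime-ideal structure and the induced residue valuation force an evaluation at a genuine point of the disk, and handling the borderline case $\lvert z\rvert = r$ carefully. Once bijectivity and continuity are in hand, the homeomorphism assertion follows formally since a continuous bijection from the compact space $\bar{\de}_r$ to the Hausdorff space $\M(A_r)$ is automatically a homeomorphism.
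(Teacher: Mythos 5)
The paper itself gives no proof of this lemma --- it is quoted verbatim from \cite[prop. A.4]{BJ} --- so your argument must stand on its own, and can only be compared with the standard proof. Its skeleton (show the map lands in $\M(A_r)$, is continuous and bijective, then conclude from compactness of $\bar{\de}_r$ and Hausdorffness of $\M(A_r)$) is indeed the standard route, and your continuity computation at $z=0$ is correct. The genuine error is your identification of $\lvert \cdot \rvert_z$ with the plain evaluation $f \mapsto \lvert f(z)\rvert$ for $z \neq 0$. In fact $r^{\log\lvert f(z)\rvert/\log\lvert z\rvert} = \lvert f(z)\rvert^{\epsilon(z)}$ with $\epsilon(z) = \log r/\log\lvert z\rvert \in (0,1]$, and this equals $\lvert f(z)\rvert$ only when $\lvert z\rvert = r$; there is no ``normalization making $\lvert t\rvert_z = \lvert z\rvert$'' --- on the contrary, the formula forces $\lvert t\rvert_z = r$ for \emph{every} $z$, which is exactly what lets the family of semi-norms glue with the $t$-adic point at $z=0$. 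The exponent is not cosmetic: $t$ is invertible in $A_r$ with $\lVert t^{-1}\rVert_{\text{hyb}} = r^{-1}$, so the plain evaluation at any $z$ with $0<\lvert z\rvert<r$ gives $\lvert z\rvert^{-1} > r^{-1} = \lVert t^{-1}\rVert_{\text{hyb}}$ on $t^{-1}$, hence is \emph{not} a point of $\M(A_r)$ at all; correspondingly, your bound $\lvert f(z)\rvert \le \sum_n \lVert a_n\rVert_{\text{hyb}} r^n$ fails on terms of negative degree. The correct verification goes termwise: $\lvert a_n t^n\rvert_z = \lvert a_n\rvert^{\epsilon(z)} r^n \le \lVert a_n\rVert_{\text{hyb}} r^n$, using $\lvert z\rvert^{\epsilon(z)} = r$ and the subadditivity of $x \mapsto x^{\epsilon}$ for $\epsilon \le 1$.

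The same oversight invalidates your surjectivity and injectivity steps. Every $x \in \M(A_r)$ satisfies $\lvert t\rvert_x = r$ (from $\lvert t\rvert_x \le r$, $\lvert t^{-1}\rvert_x \le r^{-1}$ and multiplicativity), so the case $\lvert t\rvert_x = 0$ never occurs, and evaluating on $t$ recovers nothing --- in particular it does not recover $\lvert z \rvert$, contrary to your injectivity claim. The correct dichotomy is on the restriction of $x$ to $\C \subset A_r$, which by the classification $\M(\C^{\text{hyb}}) \simeq [0,1]$ recalled in section 1.2 equals $\lvert\cdot\rvert^{\lambda}$ for some $\lambda \in [0,1]$ --- not necessarily the standard modulus, as you assert. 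When $\lambda = 0$, a non-archimedean argument (trivial on $\C$, $\lvert t\rvert_x = r$) identifies $x$ with $r^{\ord_0(\cdot)}$; when $\lambda > 0$, Ostrowski's theorem applied to the completed residue field $\mathscr{H}(x)$ yields $\mathscr{H}(x) \simeq \C$ with absolute value $\lvert\cdot\rvert^{\lambda}$, hence a character $\chi : A_r \to \C$ with $x = \lvert\chi(\cdot)\rvert^{\lambda}$, and then $\lvert t\rvert_x = r$ forces $\lvert \chi(t)\rvert = r^{1/\lambda} \le r$: the interior points of the disk arise precisely from the exponents $\lambda < 1$ that your argument discards. (Injectivity is repaired similarly, recovering $\epsilon(z)$ from the constants and then $z$ from the values $\lvert t - c\rvert_z$, $c \in \C$.) In short, the strategy is right, but the exponent $\log r/\log\lvert z\rvert$ you dropped is the actual content of the lemma.
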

Arguing as above, one can show:
\begin{prop}
Let $X$ be a scheme of finite type over $A_r$. 
\\The associated analytic space $\pi : X^{\emph{An}} \fl \M(A_r) \simeq \bar{\de}_r$ satisfies:
$$\pi^{-1}(0) \simeq X^{\emph{an}} \; \text{and} \; \pi^{-1} (\de_r^*) \simeq X^{\emph{hol}},$$
where $\simeq$ denotes homeomorphisms.
\end{prop}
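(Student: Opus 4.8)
The plan is to mimic the construction already carried out over $\C^{\text{hyb}}$: first reduce to the affine situation $X = \Spec(B)$ with $B$ a finitely generated $A_r$-algebra, in which case $X^{\text{An}}$ is by definition the set of multiplicative semi-norms on $B$ whose restriction to $A_r$ lies in $\M(A_r)$, equipped with the coarsest topology making $x \mapsto \lvert f \rvert_x$ continuous for $f \in B$; the general case then follows by glueing affine charts compatibly with the structure map $\pi$. Via the homeomorphism $\M(A_r) \simeq \bar{\de}_r$ of the preceding lemma, I would then analyze $\pi^{-1}(z)$ according to whether $z = 0$ or $z \in \de_r^*$, in each case by reading off the restriction to $A_r$ of a point of $\pi^{-1}(z)$.

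For $z \in \de_r^*$, the restricted absolute value $\lvert \cdot \rvert_z = \lvert \cdot \rvert^{\log r / \log \lvert z \rvert}$ is a positive power of the pullback of the complex modulus under the evaluation morphism $\mathrm{ev}_z : A_r \fl \C$, $f \mapsto f(z)$ (well-defined since elements of $A_r$ converge on the annulus), whose kernel is the maximal ideal $\mathfrak{m}_z$. Consequently the fiber algebra $B \otimes_{A_r, \mathrm{ev}_z} \C$ is a finitely generated $\C$-algebra defining the complex fiber $X_z$, and multiplicative semi-norms on $B$ extending $\lvert \cdot \rvert_z$ correspond to bounded multiplicative semi-norms on $B \otimes_{A_r} \C$; by the Gelfand-Mazur theorem these are exactly the evaluations at points of $X_z$, so that $\pi^{-1}(z) \simeq X_z^{\text{hol}}$. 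Letting $z$ vary, the same argument as for $Z^{\text{hyb}_{\C}}$ upgrades this to a $\pi$-homeomorphism $\pi^{-1}(\de_r^*) \simeq X^{\text{hol}}$.

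For $z = 0$, the restriction is $\lvert f \rvert_0 = r^{\ord_0(f)}$, i.e. the $t$-adic absolute value on $A_r$ normalized by $\lvert t \rvert = r$. The key point is that $A_r$ contains $\C[t, t^{-1}]$ and is $t$-adically dense in its completion $\C((t)) = K$, so that base change along $A_r \fl K$ identifies $X_K = X \otimes_{A_r} K$ with the $K$-scheme whose Berkovich analytification is $X^{\text{an}}$. By density, restriction of semi-norms along $B \hookrightarrow B \otimes_{A_r} K$ should set up a bijection between the multiplicative semi-norms on $B$ restricting to $\lvert \cdot \rvert_0$ and the multiplicative semi-norms on $B \otimes_{A_r} K$ extending $\ord_0$, that is, between $\pi^{-1}(0)$ and $X^{\text{an}}$.

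It then remains to verify that all of the above bijections are homeomorphisms — which is immediate on each individual fiber, both sides carrying the coarsest topology making $x \mapsto \lvert f \rvert_x$ continuous — and that they are compatible with the glueing of affine charts. I expect the main obstacle to be the identification at the central fiber: one must check that extension/restriction of semi-norms along the dense inclusion $A_r \hookrightarrow K$ is genuinely bijective and bicontinuous (using completeness of $K$ together with multiplicativity), and that this non-archimedean fiber is glued continuously to the archimedean fibers inside $X^{\text{An}}$, so that the homeomorphism type $\pi^{-1}(0) \simeq X^{\text{an}}$ is the correct one. The punctured-disk part, by contrast, is a routine transcription of the Gelfand-Mazur argument already used over $\C^{\text{hyb}}$.
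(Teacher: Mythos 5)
Your proposal is correct and takes essentially the same approach as the paper, whose entire proof is the phrase ``Arguing as above'': namely, transpose the $\C^{\text{hyb}}$ discussion by reducing to affine charts, identifying the fibers over $z \neq 0$ via rescaling and Gelfand--Mazur, and identifying the fiber over $0$ with the analytification over the completed residue field. The one ingredient you correctly single out as needing care --- that $A_r$ is dense in $K = \C((t))$ for the $t$-adic absolute value, so that restriction of semi-norms along $B \fl B \otimes_{A_r} K$ is a bijection onto the fiber $\pi^{-1}(0)$ --- is exactly the point the paper leaves implicit (it is covered by \cite[App. A]{BJ}), and your sketch of it is sound.
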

Now, if $X \xrightarrow{\pi} \de^*$ is a projective meromorphic family of complex manifolds, it is defined by a finite number of polynomial equations over $A_r$, for every $r<1$, and one can define $X^{\text{hyb}}_r$ as the analytification $X^{\text{An}}_r$ of $X$ viewed as a scheme of finite type of $A_r$. Taking the union of those for all $r<1$ yields the following:
\begin{defn}
Let $X \xrightarrow{\pi} \de^*$ be a projective degeneration of complex manifolds. The associated hybrid space $X^{\emph{hyb}}$ is the union of analytic spaces $X^{\emph{An}}_r$ associated to $X$ for all $0<r<1$, as defined above.
\\It comes with a structure map $p : X^{\emph{hyb}} \fl \de$, whose restriction to $\de^*$ coïncides with $\pi$, and such that $p^{-1}(0) \simeq X^{\emph{an}}$.
\end{defn}
In order to study convergence of measures on the hybrid space $X^{\text{hyb}}$ associated to a projective degeneration of complex manifolds, we will now describe how the topology of $X^{\text{hyb}}$ can be understood in terms of "smaller" and more tractable hybrid spaces $\X^{\text{hyb}}$ associated to simple normal crossing models $\X/ \de$ of $X$.
\\A model $\X$ of $X$ is \emph{simple normal crossing} (snc for short) if it has smooth total space, and is such that the central fiber $\X_0$ has simple normal crossings support, i.e.  $\X_0 = \sum_{i \in I} a_i D_i$, where the $D_i$ are smooth prime divisors, intersecting each other transversally.
\\Under this simple normal crossings assumption, for all $J \subset I$, each $D_J = \cap_{j \in J} D_j $ is smooth of dimension $n+1 - \lvert J \rvert$, and a connected component $Y$ of such a $D_J$ will be called a \emph{stratum} of $\X_0$.
\\Let now $(Z, D)$ be a pair, consisting of a complex manifold $Z$ and a simple normal crossing divisor $D = \sum_{i \in I} m_i D_i$.
\\We recall the construction of Boucksom-Jonsson of the hybrid space $Z^{\text{hyb}_D}$ associated to the pair $(Z, D)$, mainly following \cite[sec. 2]{BJ}. If the manifold $Z$ is compact, this provides a compactification of $Z \backslash D$ by adding as a boundary the dual complex of $D$, a cell complex encoding the combinatorics of the intersections of the components of $D$. Faces of this complex are in one-to-one correspondence with \emph{strata} of $D$, that is, connected components $Y$ of the non-empty intersections $D_J = \cap_{j \in J} D_j$ for $J \subset I$. We recall the precise definition:
\begin{defn}
Let $(Z, D)$ be a simple normal crossing pair. 
To each stratum $Y$ of $D$ which is a connected component of $D_J$, we associate a simplex:
$$ \sigma_Y = \{ w \in \R_{\ge0}^{J} / \sum_{j \in J} m_j w_j =1 \}.$$
We define the cell complex $\cD(D)$ by the following incidence relations: $\sigma_Y$ is a face of $\sigma_{Y'}$ if and only if $Y' \subset Y$.
\end{defn}
For instance, if $Z$ is a proper surface, and $D$ a nodal curve without self-intersection on $Z$, then $\cD(D)$ is nothing but the dual graph of the curve $D$.
\\As a set, the hybrid space $Z^{\text{hyb}_D}$ is by definition:
$$ Z^{{\text{hyb}_D}} = Z \backslash D \sqcup \cD(D).$$
\\The topology on $Z^{\text{hyb}_D}$ is defined using the local $\Log$ maps, which are given as follows. These maps are defined on every open chart $(U, z)$, with $z \in \de^n$, $z_i$ a local equation for $D_i$, satisfying the following property: if $J$ is the set of $j \in I$ such that $U$ meets $D_j$, then $U \cap D$ meets only one component $Y_U \subset D_J$ (those charts are called \emph{adapted} in \cite{BJ}).  Given such an open chart, we define a continuous map:
$$ \Log_U : U \backslash D \fl \sigma_U,$$
$$ z \longmapsto \bigg(\frac{\log \lvert z_j \rvert}{ \log \lvert f_U \rvert}\bigg)_{j \in J},$$
where $f_U = \prod_{j \in J} z_j^{m_j}$ is a local equation for $D$ on $U$.
\\We then extend this map to $U^{\text{hyb}_D} := U \backslash D \sqcup \sigma_Y$ by setting $\Log_U = \Id_{\sigma}$ on $\sigma_U$.
This construction can then be globalized (although non-canonically), using a partition of unity argument:
\begin{prop}{(\cite[prop. 2.1]{BJ})}
\\There exists an open neighbourhood $V$ of $D$ in $Z$, and a map $\Log_V : V^{\emph{hyb}_D} \fl \cD(D)$, satifying the following property: for every chart $(U, z)$ as above, and $U \subset V$, we have $\Log_V (U \backslash D) \subset \sigma_U$ and:
$$ \Log_V = \Log_U + O((\log \lvert f_U \rvert^{-1})^{-1})$$
uniformly on compacts subsets of $U$.
\end{prop}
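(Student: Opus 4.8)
The plan is to glue the local maps $\Log_U$ into a single map $\Log_V$ by means of a partition of unity, the crucial point being that, near $D$, any two such local maps differ only by a term of the prescribed size $O((\log|f_U|^{-1})^{-1})$. The engine of the whole argument is the elementary estimate describing how $\Log_U$ changes under a change of adapted chart: on an overlap $U \cap U'$ each local equation $z_j'$ differs from $z_j$ by a nowhere-vanishing holomorphic unit $u_j$, whence $f_{U'} = u\, f_U$ with $u = \prod_j u_j^{m_j}$ a unit as well. Setting $L = \log|f_U|$, which tends to $-\infty$ as one approaches $D$, the $j$-th coordinate of $\Log_{U'}$ is $\frac{\log|z_j| + \log|u_j|}{L + \log|u|}$; since $\log|z_j|/L$ is bounded while $\log|u_j|,\log|u|$ remain bounded on compacts, expanding in $1/L$ yields
$$\Log_{U'} = \Log_U + O\big(|L|^{-1}\big) = \Log_U + O\big((\log|f_U|^{-1})^{-1}\big)$$
uniformly on compact subsets of $U \cap U'$. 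This is the analytic heart of the statement and is entirely routine.

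The remaining, more delicate issue is purely combinatorial: one must ensure that $\sum_\alpha \rho_\alpha(z)\,\Log_{U_\alpha}(z)$ is a bona fide convex combination inside a single simplex of $\cD(D)$, and that it in fact lands in $\sigma_U$ for the test chart $U$. I would arrange this by taking the cover $\{U_\alpha\}$ of a neighbourhood $V$ of $D$ to consist of sufficiently fine adapted charts, namely small tubular neighbourhoods of the strata of $D$, together with a subordinate partition of unity $\{\rho_\alpha\}$. For such a fine tube $U_\alpha$ around a stratum $Y_\alpha$, every point is close to all the components passing through $Y_\alpha$, so a point $z$ lies in $U_\alpha$ only if the index set $J_\alpha$ equals the set $J(z)$ of components that $z$ is close to; in particular all cover charts containing a given $z$ share the same index set $J(z)$ and refer to the same stratum $Y(z)$. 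Consequently the images $\Log_{U_\alpha}(z)$ all lie in the single simplex $\sigma_{Y(z)}$, and their convex combination makes sense there. Moreover, if $z$ belongs to an arbitrary adapted chart $U$ with $U \subset V$, then $J(z) \subseteq J_U$, so $\sigma_{Y(z)}$ is a face of $\sigma_U$ and the combination lies in $\sigma_U$, as required.

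With the cover so chosen I would set $\Log_V(z) = \sum_\alpha \rho_\alpha(z)\,\Log_{U_\alpha}(z)$ on $V \setminus D$ and $\Log_V = \Id$ on $\cD(D)$. For a fixed adapted chart $(U,z)$ with $U \subset V$, the comparison estimate (applied with $f_{U_\alpha}$ and $f_U$ differing by a unit, hence with comparable logarithms) together with $\sum_\alpha \rho_\alpha \equiv 1$ gives
$$\Log_V = \sum_\alpha \rho_\alpha\big(\Log_U + O((\log|f_U|^{-1})^{-1})\big) = \Log_U + O\big((\log|f_U|^{-1})^{-1}\big)$$
uniformly on compacts of $U$, which is precisely the asserted estimate, while the previous paragraph gives $\Log_V(U\setminus D)\subset\sigma_U$. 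Continuity of $\Log_V$, including across the boundary $\cD(D)$ where it restricts to the identity, then follows because the error term tends to $0$ as $z \to D$ while each $\Log_U$ extends continuously to $\sigma_U$ by construction — and the topology of the hybrid space is defined precisely so as to make the local maps $\Log_U$ continuous.

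I expect the main obstacle to be the combinatorial bookkeeping of the second paragraph: verifying that the cover can be chosen fine enough that, for every point near $D$, all the contributing local images lie in one simplex, and in the simplex $\sigma_U$ of any adapted chart through that point. This rests essentially on the simple normal crossing hypothesis, which guarantees the conical structure of the stratification inside each adapted chart; once this is secured, both the uniform estimate and the continuity of $\Log_V$ are formal consequences of the unit computation.
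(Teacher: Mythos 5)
Your overall strategy coincides with the one the paper points to (it does not reprove this statement, which is quoted from \cite[prop. 2.1]{BJ}): glue the local maps by a partition of unity, the key input being the change-of-chart estimate. Your first paragraph, the analytic heart, is correct: on compact subsets of an overlap, local equations of the same $D_j$ differ by units, so $\log \lvert f_{U'} \rvert = \log \lvert f_U \rvert + O(1)$ while $\log \lvert z_j \rvert / \log \lvert f_U \rvert$ stays bounded, which gives the $O\big( (\log \lvert f_U \rvert^{-1})^{-1} \big)$ comparison.

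The gap is in the combinatorial step, which you rightly single out as the crux, but whose resolution as you state it is false. No open cover of a neighbourhood $V$ of $D$ by adapted charts can have the property that all cover charts containing a given point share the same index set: if it did, then for each $J$ the union $W_J$ of the cover charts with index set $J$ would be open, and distinct $W_J$'s would be pairwise disjoint, so $V$ (which is connected when $D$ is, since every adapted chart meets $D$) would be partitioned into at least two nonempty disjoint open sets as soon as two distinct index sets occur --- and in your tubular construction they do occur, since tubes around the open parts $\mathring{D}_i$ have $J = \{i\}$ while tubes around deeper strata have larger $J$. Transition regions, where a chart adapted to a component of $D_1 \cap D_2$ overlaps a chart adapted to $D_1$ alone, are unavoidable. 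What can be arranged --- by your tubular construction, but with radii graded by codimension --- and what actually suffices, is the weaker \emph{incidence} property: whenever two cover charts overlap near $D$, their index sets are nested and the corresponding strata incident, so that the simplices $\sigma_{Y_\alpha}$ of all charts through a point $z$ are faces of the single simplex attached to the deepest stratum involved. The convex combination then lies in that simplex, and the comparison estimate survives nesting because the coordinates of $\Log_U$ indexed by the complement of a smaller index set are themselves $O\big( (\log \lvert f_U \rvert^{-1})^{-1} \big)$ on compact subsets. Note finally that the same false claim underlies your deduction of $\Log_V(U \setminus D) \subset \sigma_U$ from ``$J(z) \subseteq J_U$'': an adapted chart $U$ containing $z$ need not meet every component passing close to $z$ (its closure may touch $D_2$ while $U \cap D_2 = \emptyset$), so that inclusion also has to be extracted from the incidence structure of the cover, not from a pointwise identity of index sets.
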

We are now ready to define the topology on $Z^{\text{hyb}_D}$:
\begin{defn}
The hybrid topology on $Z \backslash D \sqcup \cD(D)$ is the coarsest topology such that:
\\i) $Z \backslash D$ is open inside $Z^{\emph{hyb}_D}$;
\\ ii) for every open neighbourhood $V$ of $D$ inside $Z$, $V \backslash D \sqcup \cD(D)$ is open in $Z^{\emph{hyb}_D}$;
\\iii) $\Log_V : V^{\emph{hyb}_D} \fl \cD(D)$ is continuous.
\end{defn}
It follows from the above proposition that the topology on $Z^{\text{hyb}_D}$ is independent of the choice of $V$ and map $\Log_V$.
\begin{ex}
Let $\mathcal{D}= G/K$ be a Hermitian symmetric domain, and $\Gamma \subset O(\mathcal{D})$ an arithmetic subgroup.
\\It is a classical problem in representation theory to study compactifications of quotients of the form $X =\Gamma \backslash \mathcal{D}$. 
\\For instance, Satake \cite{Sa} associates to representations $\tau$ of $G$ topological compactifications $\bar{X}^{\tau}$ of $X$, which generalize the classical Baily-Borel compactification.
\\The boundary of such compactifications have typically very large codimension; a way to produce compactifications with divisorial boundary is via so-called \emph{toroidal compactifications}, which have the defect of being non-canonical and depend on heavy combinatorial data.
\\It turns out that those two constructions are related by the above hybrid construction: for any two toroïdal compactifications $\bar{X}_1$, $\bar{X}_2$ of $X$, the associated hybrid spaces $\bar{X}_1^{\emph{hyb}} \xrightarrow{\sim} \bar{X}_2^{\emph{hyb}}$ are canonically homeomorphic (extending the identity of $X$), and actually coïncide with the Satake compactification $\bar{X}^{\tau_{ad}}$ associated to the adjoint representation of $G$, by \cite[theo. I]{OO}
\end{ex}
Going back to our original set-up, where $X \xrightarrow{\pi} \de^*$ is meromorphic degeneration of Kähler-Einstein manifolds, the above construction allows us to associate to each snc model $\X / \de$ of $X$ a hybrid space which we denote by $\X^{\text{hyb}} \xrightarrow{\pi} \de$, associated to the pair $(\X, \X_0)$, with central fiber $\pi^{-1}(0) = \cD(\X_0)$.
\\Those hybrid spaces now carry information about the combinatorics of models of $X$, and can be seen as form of tropicalizations of the snc models of $X$. 
\\Let us now explain how the "big" hybrid space $X^{\text{hyb}}$ can be recovered using the collection of all $\X^{\text{hyb}}$. To that end, one defines maps of dual complexes $r_{\X' \X} : \cD(\X'_0) \fl \cD(\X_0)$ for every ordered pair $h : \X' \fl \X$ of snc models, as follows: given a face $\sigma_Y'$ of $\cD(\X')$, there exists a unique minimal stratum $Y$ of $\X_0$ such that $h(Y') \subset Y$.
\\Writing $J$ (resp. $J'$) for the set of indices $j$ such that $Y \subset D_J$ (resp. $Y' \subset D'_j$), we write for every $i \in J$ the pullback $h^*D_i = \sum_{j \in J'} a_{ij} D'_j$. The map $r_{\X' \X}$ is now defined by mapping the simplex $\sigma_{Y'}$ to $\sigma_Y$, according to the formula:
$$r_{\X' \X} ((w'_j)_{j \in J'}) = (\sum_{j \in J'} a_{ij} w'_j)_{i \in J}.$$
One can check that for any ordered triple $\X'' \fl \X' \fl \X$ of snc models of $X$, one has the transitivity property $r_{\X'' \X} = r_{\X'' \X'} \circ r_{\X' \X}$. 
\\Extending the maps $r_{\X' \X}$ to the hybrid spaces $\X^{\text{hyb}}$ by setting $r = \Id$ on $X$, we make the partially ordered set of snc models of $X$ into a projective system, and one can construct a homeomorphism:
$$X^{\text{hyb}} \xrightarrow{\sim} \varprojlim_{\X} \X^{\text{hyb}},$$
the right hand side being equipped with the inverse limit topology.
\\We will now describe the construction of the above homeomorphism. Given any snc model $\X$ of $X$ over $R $, there is a natural embedding $i_{\X}$ of the dual complex $\cD(\X)$ into $X^{\text{an}}$, given as follows. 
\\The vertices $v_i$ of $\cD(\X)$ are in one-to-one correspondence with irreducible components $D_i$ of the central fiber $\X_0 = \sum_{i \in I} m_i D_i$, so that we set $i_{\X}(v_{i}) = v_{D_i} := m^{-1}_i \ord_{D_i}$, where the valuation $\ord_{D_i}$ is simply the one associating to a meromorphic function $f \in K(X) \simeq K(\X)$ its vanishing order along $D_i$ - the normalisation by $m^{-1}_i$ ensuring that $v_{D_i}(t) = 1$. A valuation given in this way, for some snc model $\X$ of $X$, is called \emph{divisorial}.
\\One can now somehow interpolate between those divisorial valuations using \emph{quasi-monomial} valuations, in order to embed $\cD(\X)$ into $X^{\text{an}}$:
\begin{prop}{(\cite[prop. 2.4.6]{MN}).}
\\Let $\X$ be an snc model of $\X$, with central fiber $\X_0 = \sum_{i \in I} m_i D_i$.
\\Let $J \subset I$ such that $D_J = \cap_{j \in J} D_j$ is non-empty, and $Y$ a connected component of $D_J$, with generic point $\eta = \eta_Y$. 
\\We furthermore fix a local equation $z_j \in \gO_{\X,\eta}$ for $D_j$, for any $j \in J$.
\\For any $w \in \sigma_Y = \{ w \in \R^J_{\ge 0} / \sum_{j \in J} m_j w_j =1 \}$, there exists a unique valuation: 
$$v_w : \gO_{\X,\eta} \fl \R_{\ge_0} \cup \{ + \infty\}$$
 such that for every $f \in \gO_{\X,\eta}$, with expansion $f = \sum_{\beta \in \N^{J}} c_{\beta} z^{\beta}$ (with $c_{\beta}$ either zero or unit), we have:
$$v_w(f) = \min \{ (w \cdot \beta) / \beta \in \N^{J},  c_{\beta} \neq 0 \},$$
where $( \; \cdot \; )$ is the usual scalar product on $\R^J$.
\end{prop}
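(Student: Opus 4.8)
My plan is to construct $v_w$ explicitly on the completed local ring and then descend, so that the only nontrivial points become the fact that the prescribed minimum is attained, that the resulting function is multiplicative, and that it is independent of the auxiliary choices.

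First I would record the local algebraic structure. Since $\X$ is smooth and $\X_0$ is snc, the point $\eta = \eta_Y$ has codimension $|J|$ in $\X$, so $R := \gO_{\X,\eta}$ is a regular local ring of dimension $|J|$ whose maximal ideal $\mathfrak{m}$ is generated by the chosen local equations $(z_j)_{j \in J}$, with residue field $\kappa(\eta)$ equal to the function field of $Y$, of characteristic zero. Passing to the completion $\hat R$ and invoking the Cohen structure theorem, I may choose a coefficient field $\kappa(\eta) \hookrightarrow \hat R$ and obtain an isomorphism $\hat R \simeq \kappa(\eta)[[z_j : j \in J]]$; every $f \in R \subset \hat R$ then admits a power series expansion $f = \sum_\beta c_\beta z^\beta$ with $c_\beta \in \kappa(\eta)$, and I define $v_w(f)$ by the stated formula. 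The first small point to check is that the infimum is a genuine minimum: splitting $J = J_+ \sqcup J_0$ according to whether $w_j > 0$ or $w_j = 0$, the bound $w \cdot \beta \ge (\min_{J_+} w_j)\sum_{j \in J_+}\beta_j$ shows that below any threshold $C$ only finitely many values $w\cdot\beta \le C$ can occur, so the minimum over the nonempty support of $f$ is attained.

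Next I would verify that $v_w$ is a valuation. The ultrametric inequality $v_w(f+g) \ge \min(v_w(f), v_w(g))$ is immediate, cancellation only raising the minimal weight. The substantive point is multiplicativity. Writing $a = v_w(f)$, $b = v_w(g)$ and grouping each series by $w$-weight, so that $f = \mathrm{in}_w(f) + (\text{weight} > a)$ and similarly for $g$, I observe that since the weight is additive on monomials, the weight-$(a+b)$ part of $fg$ equals $\mathrm{in}_w(f)\,\mathrm{in}_w(g)$ while every remaining monomial of $fg$ has weight $> a+b$; hence $v_w(fg) = a+b$ as soon as $\mathrm{in}_w(f)\,\mathrm{in}_w(g) \ne 0$. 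This nonvanishing is the heart of the matter, and I expect it to be the main obstacle: precisely when some $w_j = 0$ the initial forms are not polynomials but power series in the variables $(z_j)_{j \in J_0}$. I would resolve this by noting that $w \cdot \beta$ depends only on the $J_+$-part of $\beta$, so each initial form is a finite $\kappa(\eta)[[z_j : j \in J_0]]$-linear combination of monomials in $(z_j)_{j \in J_+}$, that is, a nonzero element of the polynomial ring $\kappa(\eta)[[z_j : j \in J_0]][z_j : j \in J_+]$, which is an integral domain; the product of two nonzero elements is therefore nonzero, giving multiplicativity.

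It remains to address well-definedness and uniqueness. For well-definedness I must check that $v_w(f)$ does not depend on the auxiliary choice of coefficient field: if a second one is chosen, each of its constants differs from its counterpart by an element of $\mathfrak{m}$, so re-expanding replaces every monomial $c_\beta z^\beta$ by $c_\beta z^\beta$ plus terms of weight $\ge w\cdot\beta$, no strictly smaller weight ever being created since all $w_j \ge 0$. Consequently the minimal weight occurring in $f$ cannot decrease under a change of coefficient field, and by symmetry it is unchanged; the same bookkeeping gives independence of the choice of the local equations $z_j$ up to units. Uniqueness is then formal, since the displayed formula prescribes $v_w$ on every element of $R$: any valuation satisfying it coincides with the one constructed. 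I would close by recording the normalization $v_w(t) = \sum_{j \in J} m_j w_j = 1$ for $w \in \sigma_Y$, which holds because $t$ equals a unit times $\prod_{j \in J} z_j^{m_j}$ near $\eta$, and which is what places $v_w$ in $X^{\mathrm{an}}$.
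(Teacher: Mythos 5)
The paper offers no proof of this proposition---it is quoted directly from \cite{MN}---so your argument stands on its own, and most of it is the standard one: the reduction to $\widehat{\gO}_{\X,\eta} \simeq \kappa(\eta)[[z_j : j \in J]]$ via Cohen's structure theorem, the attainment of the minimum, the ultrametric inequality, and especially multiplicativity via initial forms regarded as nonzero elements of the integral domain $\kappa(\eta)[[z_j : j \in J_0]][z_j : j \in J_+]$ are all correct, and that last device is exactly the right way to handle faces where some $w_j = 0$.

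The gap is in existence/well-definedness. The parenthetical ``(with $c_\beta$ either zero or unit)'' means, as in Musta\c{t}\u{a}--Nicaise's notion of \emph{admissible expansion}, that the coefficients $c_\beta$ are allowed to be arbitrary units of $\widehat{\gO}_{\X,\eta}$, not constants from a coefficient field; if they were the latter, the ``zero or unit'' requirement would be vacuous. So the valuation you construct must be shown to return the stated minimum for \emph{every} such expansion, and your well-definedness argument treats only expansions relative to a coefficient field. Your ``by symmetry'' trick is unavailable in general: for a fixed coefficient field and fixed $z_j$'s the expansion is unique, which is what lets you run the comparison in both directions, whereas a general admissible expansion is neither unique nor canonical. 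Termwise re-expansion of an admissible expansion only yields $v_w(f) \ge m$, where $m = \min\{w\cdot\beta :\, c_\beta \neq 0\}$, and you still must rule out that all minimal-weight terms cancel upon re-expansion; this no-cancellation statement is precisely the content of the independence result in \cite{MN} (their Prop.\ 2.4.4) and is of the same substance as your multiplicativity step, not a consequence of it. It can be repaired with tools you already set up: let $\pi : \widehat{\gO}_{\X,\eta} \to \kappa(\eta)[[z_j : j \in J_0]]$ be the map sending $z_j \mapsto 0$ for $j \in J_+$. Since a weight $w\cdot\gamma$ with $\gamma$ supported on $J_+$ vanishes only for $\gamma = 0$, the weight-$m$ homogeneous part of the Cohen expansion of $f$ equals $\sum_{w\cdot\beta = m} \pi(c_\beta)\, z^\beta$, where each $\pi(c_\beta)$ is a \emph{unit} of $\kappa(\eta)[[z_j : j \in J_0]]$; and a sum of units times distinct monomials is nonzero, as one sees by inspecting the coefficient of $z^\delta$ for $\delta$ a componentwise-minimal exponent of the support. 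Hence the weight-$m$ part survives and $v_w(f) = m$ for every admissible expansion. With this lemma inserted, your proof is complete; uniqueness is indeed formal, as you say.
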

The above valuation is called the quasi-monomial valuation associated to the data $(Y, w)$. We now define $i_{\X}$ on $\sigma_Y$ by the formula $i_{\X}(w) = v_w$, which gives us a well-defined continuous injective map from $\cD(\X)$ to $X^{\text{an}}$. 
\begin{defn}
We call the image of $\cD(\X)$ by $i_{\X}$ the \emph{skeleton} of $\X$, written as $\emph{Sk}(\X) \subset X^{\emph{an}}$.
\end{defn}
By compactness of $\cD(\X)$, $i_{\X}$ induces a homeomorphism between $\cD(\X)$ and $\Sk(\X)$, so that we will sometimes abusively identify $\cD(\X)$ with $\Sk(\X)$.
\\We can now define a retraction for the inclusion $\Sk(\X) \subset X^{\text{an}}$ as follows: for a semi-valuation $v$ on $X$, there exists a minimal stratum $Y$ of $\X_0$ such that the center $c_{\X}(v)$ of $v$ is contained in $Y$. We then associate to $v$ the quasi-monomial valuation $\rho_{\X}(v)$ associated to the data $(Y, w)$, with $w_j = v(D_j)$. This should be seen as a monomial approximation of the valuation $v$, with respect to the model $\X$.
\\This retraction map can be used to study the homotopy type of the space $X^{\text{an}}$:
\begin{theo}{(\cite{Be}, \cite{Th}).}
\\The map $\rho_{\X} : X^{\emph{an}} \fl \emph{Sk}(\X)$ is continuous and restricts to the identity on $\emph{Sk}(\X)$.
\\It furthermore realizes $\emph{Sk}(\X)$ as a strong deformation retract of $X^{\text{an}}$.
\end{theo}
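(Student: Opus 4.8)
The plan is to reduce every assertion to a local computation in the toroidal coordinates attached to the strata of $\X_0$, exploiting the explicit description of monomial valuations from the preceding proposition. The first step is to make $\rho_\X$ explicit. Given $v \in X^{\text{an}}$ whose center $c_\X(v)$ lies in the minimal stratum $Y$, a component of $D_J$, and local equations $z_j \in \gO_{\X, \eta_Y}$ for $D_j$ ($j \in J$), I would record that $\rho_\X(v)$ is by definition the quasi-monomial valuation $v_w$ attached to $(Y,w)$ with $w_j = v(D_j) = v(z_j)$. Here $w \in \sigma_Y$ holds automatically: near $\eta_Y$ one has $t = u \prod_{j \in J} z_j^{m_j}$ with $u$ a unit, so $1 = v(t) = \sum_j m_j v(z_j) = \sum_j m_j w_j$. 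I would also note the domination $v_w \le v$: for $f = \sum_\beta c_\beta z^\beta$ with each $c_\beta$ a unit or zero, $v(c_\beta) = 0$ and the valuation inequality on sums gives $v(f) \ge \min_\beta \langle w, \beta\rangle = v_w(f)$.

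Next I would verify that $\rho_\X$ restricts to the identity on $\Sk(\X)$. If $v = v_w$ is quasi-monomial for $(Y,w)$, then applying the defining min-formula to the coordinate $z_j$, whose expansion is the single monomial $z_j$ ($\beta = e_j$), yields $v_w(z_j) = \langle w, e_j\rangle = w_j$, so the weights are recovered exactly. It then remains to identify the minimal stratum through the center: one checks that $c_\X(v_w) = \eta_{Y_0}$, where $Y_0$ is the stratum corresponding to the support $\{ j \in J : w_j > 0\}$ of $w$ (a face of $\sigma_Y$), since $v_w(z_j) > 0$ precisely for $j$ in the support. The data $(Y_0, w)$ then produces the same monomial valuation, so $\rho_\X(v_w) = v_w$ and $\rho_\X$ is a retraction onto $\Sk(\X)$.

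For continuity, the key observation is that each weight function $v \mapsto v(z_j)$ is continuous on the locus of valuations whose center meets $D_j$, being one of the functions $f \mapsto v(f)$ defining the topology of $X^{\text{an}}$. Thus $v \mapsto w(v) = (v(z_j))_j$ is continuous into $\cD(\X)$, and composing with the continuous embedding $i_\X : \cD(\X) \to X^{\text{an}}$ of the preceding proposition gives continuity of $\rho_\X = i_\X \circ w$ locally over a neighbourhood of each stratum. The genuine subtlety is the jumping of combinatorial type: as $v$ varies the stratum $Y$ and the index set $J$ change. This is controlled by the face relations of $\cD(\X)$ — when some weight $w_j(v)$ tends to $0$ the point $v_{w(v)}$ passes continuously into the corresponding face, because $v_w$ depends continuously on $w$ across faces — so the local descriptions glue into a globally continuous map.

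The most delicate part, and the one where essentially all the work lies, is constructing the strong deformation retraction $H : [0,1] \times X^{\text{an}} \to X^{\text{an}}$ with $H_0 = \Id$, $H_1 = \rho_\X$, and $H_s|_{\Sk(\X)} = \Id$ for all $s$. On a monomial chart I would define $H_s(v)$ by a partial monomialization: since $\rho_\X(v) = v_w \le v$, one interpolates between the two comparable valuations by a canonical toric flow that preserves the skeleton coordinate $w$ while contracting the transverse part of $v$ toward the monomial point; at each $s$ this remains a semi-valuation extending $\ord_0$, and it fixes $\Sk(\X)$ since monomial points have trivial transverse part. The main obstacle is global. The local homotopies, defined over the preimages of the strata via the toroidal coordinates, must be shown to glue into a jointly continuous map on $[0,1]\times X^{\text{an}}$, and one must check that $H_s(v)$ stays in $X^{\text{an}}$ throughout. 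This gluing rests on the fact that the monomial structure over the overlap of two strata is canonical, independent of the chosen local equations up to units; establishing this compatibility, together with joint continuity at the boundary strata, is precisely the content of the constructions of Berkovich and Thuillier.
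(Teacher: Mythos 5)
First, a point of order: the paper contains no proof of this statement --- it is quoted verbatim from Berkovich and Thuillier with citations --- so your proposal must stand entirely on its own.

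The elementary parts of your outline are correct and standard. The identity $\sum_{j} m_j v(z_j) = v(t) = 1$ (using $v(u)=0$ for the unit $u$, since $u$ and $u^{-1}$ both lie in the local ring at the center), the inequality $v_w \le v$, the computation $v_w(z_j) = w_j$, and the identification of the center of $v_w$ with the generic point of the stratum given by the support of $w$, which together yield $\rho_{\X}|_{\Sk(\X)} = \Id$ --- all of this is the right bookkeeping. Two caveats you pass over quickly: the inequality $v \ge v_w$ is not a direct consequence of the valuation axioms, because the expansion $f = \sum_{\beta} c_{\beta} z^{\beta}$ lives in the \emph{completed} local ring $\hat{\gO}_{\X,\eta}$ and the ultrametric inequality only applies to finite sums, so one must argue via truncations; and the continuity across jumps of the combinatorial type rests on the lemma that the quasi-monomial valuation attached to $(Y,w)$ with some $w_j=0$ coincides with the one attached to the corresponding face datum. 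You gesture at both, correctly, but they are where the real checks sit in that half of the argument.

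The genuine gap is in your final paragraph, which is where the actual content of the theorem lies. You never construct the homotopy $H$: ``a canonical toric flow that preserves the skeleton coordinate $w$ while contracting the transverse part of $v$'' is a description of what the homotopy should do, not a definition of it. There is no formula, no verification that $H_s(v)$ is a semi-valuation extending $\ord_0$ for each $s$, no proof of joint continuity in $(s,v)$, and no check of $H_1 = \rho_{\X}$ and $H_s|_{\Sk(\X)} = \Id$. Worse, you then state that the gluing and continuity issues are ``precisely the content of the constructions of Berkovich and Thuillier'' --- that is, at the decisive step the proof appeals to the very result being proved. In the actual arguments, the flow is defined by an explicit local formula (the point $v$ is moved through a one-parameter family of generalized Gauss points associated to the monomial structure, in the model case of polydisks and semistable standard schemes), and the bulk of the work is descent: showing that these local flows are independent of the chosen étale or formal chart and therefore glue; this is a step your sketch names but does not carry out, and it cannot be dismissed as routine since it is exactly what fails for non-toroidal models. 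So the proposal correctly reproduces the retraction-level statements but leaves the strong deformation retract claim --- the theorem itself --- unproved.
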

\begin{rem}
Under the identification $\emph{Sk}(\X) \simeq \cD(\X)$, it is not difficult to show that the map $r_{\X' \X} : \cD(\X') \fl \cD(\X)$ for an ordered pair of models $h : \X' \fl \X$ is nothing but the restriction of $\rho_{\X}$ to $\emph{Sk}(\X')$.
\end{rem}
This shows that up to homotopy, the topological type of $X^{\text{an}}$ is controlled by the one of the complexes $\cD(\X)$. Furthermore, this shows that the homotopy type of $\cD(\X)$ is in fact independent of the choice of snc model $\X$.
\\The following result now gives a rather concrete description of the Berkovich space $X^{\text{an}}$ (or at least, its underlying topological space):
\begin{theo}{(\cite[theo. 10]{KS}).}
\\The natural map $\rho : X^{\emph{an}} \fl \varprojlim_{\X} \cD(\X)$ is a homeomorphism.
\end{theo}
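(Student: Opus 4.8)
The plan is to realize $\rho$ as a continuous bijection between compact Hausdorff spaces, which is automatically a homeomorphism. First I would check that $\rho$ is well-defined and continuous. The Remark above identifies the transition map $r_{\X' \X}$ with the restriction of $\rho_{\X}$ to $\Sk(\X')$; together with the transitivity $r_{\X'' \X} = r_{\X'' \X'} \circ r_{\X' \X}$ this gives the cocycle relation $r_{\X' \X} \circ \rho_{\X'} = \rho_{\X}$ whenever $\X'$ dominates $\X$. Since any two snc models are dominated by a common one (Hironaka), the snc models form a directed system, and the continuous maps $\rho_{\X} : X^{\text{an}} \fl \cD(\X)$ therefore assemble, by the universal property of the inverse limit, into a continuous map $\rho$. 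For the target, an inverse limit of compact Hausdorff spaces is compact Hausdorff, and $X^{\text{an}}$ is compact because $X$ is proper over $K$; so it remains only to prove that $\rho$ is bijective.

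Surjectivity is the easier half and requires no fine analysis of semivaluations. I would first observe that, by directedness, the single-coordinate cylinders $\mathrm{pr}_{\X}^{-1}(U)$ (with $U \subset \cD(\X)$ open and $\mathrm{pr}_{\X}$ the projection from the inverse limit) form a basis of the topology: a finite intersection $\bigcap_k \mathrm{pr}_{\X_k}^{-1}(U_k)$ equals $\mathrm{pr}_{\X'}^{-1}\big(\bigcap_k r_{\X' \X_k}^{-1}(U_k)\big)$ for any common refinement $\X'$. Now the image $\rho(X^{\text{an}})$ is compact, hence closed in the Hausdorff target; and it is dense, since for any nonempty $U \subset \cD(\X)$ and any $x \in U$ the quasimonomial valuation $i_{\X}(x) \in \Sk(\X) \subset X^{\text{an}}$ satisfies $\rho_{\X}(i_{\X}(x)) = x$ (because $\rho_{\X}$ restricts to the identity on $\Sk(\X)$), so that $\rho(i_{\X}(x)) \in \mathrm{pr}_{\X}^{-1}(U)$. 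A closed dense subset is everything, so $\rho$ is onto.

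Injectivity rests on the single analytic input that drives the whole theorem: for every $v \in X^{\text{an}}$, the net of monomial approximations converges back to $v$, i.e.\ $\rho_{\X}(v) \fl v$ in $X^{\text{an}}$ as $\X$ runs over the directed system of snc models. Granting this, if $v, w \in X^{\text{an}}$ satisfy $\rho_{\X}(v) = \rho_{\X}(w)$ for all $\X$, then passing to the limit yields $v = w$; hence $\rho$ is injective, which completes the proof.

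The crux is thus the convergence $\rho_{\X}(v) \fl v$, and this is where I expect the real work to lie. The inequality $\rho_{\X}(v) \le v$ holds for essentially formal reasons: writing $f = \sum_{\beta} c_{\beta} z^{\beta}$ near the center of $v$ and using $w_j = v(z_j)$ together with $v(c_\beta) \ge 0$, the ultrametric inequality gives $v(f) \ge \min_{c_\beta \neq 0}(v(c_\beta) + w \cdot \beta) \ge v_w(f) = \rho_{\X}(v)(f)$. The substance is that this approximation becomes exact in the limit, i.e.\ $v(f) = \sup_{\X} \rho_{\X}(v)(f)$ for every regular $f$. For this I would fix $f$ and pass to a model $\X'$ on which $\mathrm{div}(f)$ has snc support compatible with $\X'_0$; then near the center of $v$ the function $f$ becomes a unit times a monomial in the coordinates cutting out the components of $\X'_0$, whence $\rho_{\X'}(v)(f) = \sum_j a_j v(z_j) = v(f)$ exactly. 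The directedness of the system then upgrades this pointwise exactness into convergence of the net. This monomialization step, which reduces the analytic convergence statement to resolution of singularities applied to the divisor of $f$, is the technical heart of the argument.
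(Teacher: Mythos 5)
The paper itself gives no proof of this theorem (it is quoted from \cite[theo. 10]{KS}), so I am comparing your argument against the standard proof from the literature. Your overall architecture is exactly that standard route: compatible continuous retractions $\rho_{\X}$ induce a continuous map to the inverse limit, both sides are compact Hausdorff, surjectivity follows from the image being closed and dense (using $\rho_{\X}\circ i_{\X} = \Id$ on each $\cD(\X)$), and injectivity reduces to the pointwise convergence $\rho_{\X}(v)\to v$. The genuine gap is in the step you yourself call the technical heart. It is \emph{not} true that once $\mathrm{div}(f)+\X'_0$ has snc support one gets $\rho_{\X'}(v)(f)=v(f)$. Snc-ness only gives, near the center of $v$, a factorization $f = u\prod_j z_j^{a_j}\prod_k y_k^{b_k}$ in which the $z_j$ are vertical but the $y_k$ cut out \emph{horizontal} components of $\mathrm{div}(f)$. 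Each $y_k$ is a unit at the generic point of the stratum of $\X'_0$ containing the center, hence contributes nothing to the quasi-monomial valuation $\rho_{\X'}(v)$; but $v$ itself may assign it positive value, so that
$$ v(f) \;=\; \rho_{\X'}(v)(f) + \sum_k b_k\, v(y_k) $$
can be strictly larger than $\rho_{\X'}(v)(f)$. Concretely: $X=\mathbb{P}^1_K$ with affine coordinate $y$, $\X'=\mathbb{P}^1_R$ the trivial model, $v$ the monomial valuation with $v(y)=\alpha>0$, $v(t)=1$, and $f=y$. Then $\mathrm{div}(y)+\X'_0$ is already snc, the center of $v$ is the origin in $\X'_0$, $\rho_{\X'}(v)$ is the divisorial valuation $\ord_{\X'_0}$, and $\rho_{\X'}(v)(y)=0\neq\alpha=v(y)$.

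Repairing this takes more than resolving $\mathrm{div}(f)$, and splits into two cases, both of which your injectivity argument needs. For valuations with $\xi_v$ the generic point and $v(f)<\infty$, one must blow up further until the center of $v$ is separated from every horizontal component $y_k$ with $v(y_k)>0$ (in the example: the continued-fraction sequence of point blow-ups, after which $v$ lands in the skeleton); that this can be done in finitely many steps for each fixed $f$ is a real lemma, not a consequence of Hironaka applied to $\mathrm{div}(f)$. Worse, for genuine semivaluations --- points $x$ with $\xi_x$ not the generic point, e.g. the $K$-point $y=0$ of $\mathbb{P}^1_K$ --- and $f$ vanishing on $\overline{\{\xi_x\}}$, one has $v(f)=+\infty$ while $\rho_{\X}(v)(f)$ is finite on \emph{every} model, so exactness at a finite stage is impossible; one must instead show $\sup_{\X}\rho_{\X}(v)(f)=+\infty$, e.g. by checking that iterated blow-ups of the closure of the center increase the value without bound. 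Since such points do exist in $X^{\text{an}}$ and your argument quantifies over all of them, the convergence $\rho_{\X}(v)\to v$, and with it injectivity, is not established as written. (A smaller gap of the same nature: the compatibility $r_{\X'\X}\circ\rho_{\X'}=\rho_{\X}$ does not formally follow from the Remark plus transitivity of the $r$'s; it is equivalent to $\rho_{\X}\circ\rho_{\X'}=\rho_{\X}$, which needs its own verification on centers.)
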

This in particular implies our claim that:
$$X^{\text{hyb}} \xrightarrow{\sim} \varprojlim_{\X} \X^{\text{hyb}}.$$
\subsection{Degenerations of Calabi-Yau manifolds}
In the case where the variety $X/K$ has semi-ample (i.e., eventually base-point free) relative canonical divisor $K_X$, one can actually define a "smallest" skeleton $\Sk(X)$, contained in any skeleton $\Sk(\X)$ for an snc model $\X$.
\\Without giving the precise definition (see for instance \cite{MN}), we rather explain how $\Sk(X)$ can be computed explicitely, starting from a simple normal crossing model $\X / R$, with central fiber $\X_0 = \sum_{i \in I} m_i D_i$.
\\For each holomorphic $m$-pluriform $\Omega \in H^0(\X, mK_{\X/R})$ on $\X$, we attach weights to the vertices $v_i$ of $\Sk(\X)$, according to the formula $w_i = \frac{\ord_{D_i} (\Omega) +m}{m_i}$, and define $\Sk_{\Omega}(X)$ to be the union of closed faces $\sigma$ of $\Sk(\X)$ such that $w_i = \min_i w_{i}$ for every vertex $w_i \in \sigma$. This is indeed independent of the choice of snc model $\X$, as follows from \cite{MN}.
\\Setting $\Sk(X) = \bigcup_{\Omega} \Sk_{\Omega}(X)$, we obtain a \emph{canonical} skeleton inside $X^{\text{an}}$. This is the essential skeleton of $X$, or Kontsevich-Soibelman skeleton.
\\In the strictly Calabi-Yau case (that is, $K_X = \gO_X$ and $h^{i,0}(X) =0$ for $0<i<n$), $\Sk(X) = \Sk_{\Omega}(X)$ for any n-form $\Omega$ trivializing $K_X$, and if it is of maximal dimension, $\Sk(X)$ satisfies nice topological properties: it is a rational homology sphere, and a topological manifold in codimension one, cf. \cite{NX}. It is actually expected to be homeomorphic to the n-sphere $\mathbb{S}^n$ in this case, see \cite{KX} for partial results towards this.
\\We can now state the main result of \cite{BJ}:
\begin{theo}
Let $(X, L) \xrightarrow{\pi} \de^*$ be a relatively polarized degeneration of n-dimensional Calabi-Yau manifolds, and let $\nu_t = i^{n^2} \Omega_t \wedge \bar{\Omega}_t$.
\\The total mass $\nu_t(X_t)$ obeys the following asymptotic at $t=0$:
$$ \nu_t(X_t) \sim C \lvert t \rvert^{2 \kappa} (\log \lvert t \rvert^{-1})^d,$$
with $\kappa \in \Q$ and $d \in \{0,...,n\}$ is the dimension of $\emph{Sk}(X)$.
\\Moreover, the family of rescaled measures (or Kähler-Einstein measures) $\mu_t = \frac{\nu_t}{\nu_t(X_t)}$ converge weakly on $X^{\emph{hyb}}$ to a Lebesgue-type measure $\mu_0$ supported on the essential skeleton $\emph{Sk}(X)$.
\end{theo}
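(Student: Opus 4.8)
The plan is to reduce the statement to an explicit asymptotic computation on a fixed simple normal crossing model, exploiting the identification $X^{\text{hyb}} \simeq \varprojlim_{\X} \X^{\text{hyb}}$ established above: since the inverse-limit topology is generated by the pullbacks of continuous functions from the finite-level spaces $\X^{\text{hyb}}$, weak convergence of $\mu_t$ on $X^{\text{hyb}}$ can be tested model by model, against functions pulled back from the dual complexes $\cD(\X_0)$ together with continuous functions on the archimedean part $X$. First I would fix an snc model $\X/R$ on which the relative volume form $\Omega$ extends as a rational section of $K_{\X/R}$, write $\X_0 = \sum_{i \in I} m_i D_i$, and attach to each vertex $v_i$ the weight $w_i = (\ord_{D_i}(\Omega)+1)/m_i$. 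With $w_{\min} = \min_i w_i$, the essential skeleton $\Sk(X)$ is the union of the closed faces of $\cD(\X_0)$ on which this weight function is minimized; I would set $\kappa = w_{\min}-1$ and let $d = \dim \Sk(X)$ be the maximal dimension of a minimizing face.

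The first analytic input is the mass asymptotic. I would compute $\nu_t(X_t) = \int_{X_t} i^{n^2}\Omega_t \wedge \bar\Omega_t$ by localizing near the strata of $\X_0$ using a partition of unity subordinate to adapted charts. On such a chart, with coordinates $(z_1,\dots,z_{n+1})$ in which $t = u\prod_{j} z_j^{m_j}$ for a unit $u$ and $\Omega = v\prod_j z_j^{\ord_{D_j}(\Omega)}\,\eta$ for a unit $v$ and a relative frame $\eta$, passing to polar coordinates $z_j = \rho_j e^{i\theta_j}$ turns the fiberwise integral over $\{\,|t| = \text{const}\,\}$ into a real integral over a region cut out by $\sum_j m_j \log\rho_j^{-1} = \log|t|^{-1}$. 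A Laplace/Mellin-type analysis of this integral shows that, as $t \to 0$, the dominant contribution comes exactly from the charts meeting the minimizing locus and yields $\nu_t(X_t) \sim C\,|t|^{2\kappa}(\log|t|^{-1})^d$ with the stated $\kappa$ and $d$: contributions of non-minimizing strata are smaller by a positive power of $|t|$, while minimizing faces of dimension $< d$ carry a lower power of $\log|t|^{-1}$.

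For the weak convergence I would push the rescaled measures $\mu_t = \nu_t/\nu_t(X_t)$ forward under $\Log_V : V^{\text{hyb}_D} \to \cD(\X_0)$ and show that $(\Log_V)_*\mu_t$ converges to a Lebesgue-type probability measure $\mu_0$ supported on $\Sk(X)$. Keeping track of the $\Log$-coordinates $w_j = \log|z_j|/\log|f_U|$ in the same local computation, the change of variables into these coordinates produces a density on the simplex whose rescaled limit is, up to normalization, the Lebesgue measure on the minimizing face; the normalization is fixed by the mass asymptotic of the previous step. Two facts must then be combined: away from any neighbourhood of $\Sk(X)$ the rescaled mass tends to $0$, so no mass survives on the archimedean part $X \subset X^{\text{hyb}}$ nor on the non-minimizing faces; and on $\Sk(X)$ the pushforwards converge. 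Since the maps $\Log_V$ are compatible with the transition maps $r_{\X'\X}$ up to lower order, this identifies the limit with a well-defined Lebesgue-type measure on the essential skeleton, independent of the model.

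The main obstacle is the local asymptotic analysis of the second step together with its compatibility across models: one must control the error terms coming from the units $u,v$, from the non-canonical choice of $\Log_V$ (which agrees with the chart maps $\Log_U$ only up to $O((\log|f_U|^{-1})^{-1})$), and from the overlaps of the partition of unity, uniformly enough to isolate the leading $|t|^{2\kappa}(\log|t|^{-1})^d$ term and to guarantee that the limiting density is the intrinsic Lebesgue measure on $\Sk(X)$ rather than a model-dependent one. Proving that the lower-dimensional and non-minimizing strata contribute negligibly after rescaling — so that all the mass concentrates on the top-dimensional minimizing faces — is the crux that forces the weak limit to be supported precisely on $\Sk(X)$.
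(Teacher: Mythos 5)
This statement is not proved in the paper at all: it is quoted verbatim as the main result of \cite{BJ}, and the paper only uses it as a black box (the paper's own contribution, Theorems A and B, concerns the canonically polarized case). So the only meaningful benchmark is the proof in \cite{BJ} itself, and your outline is essentially that proof: reduction of hybrid weak convergence to the finite-level spaces $\X^{\text{hyb}}$ via the inverse-limit description, localization in adapted charts, Laplace-type asymptotics of the fiber integrals in logarithmic coordinates, and concentration of the rescaled pushforwards on the faces where the weight $w_i = (\ord_{D_i}(\Omega)+1)/m_i$ is minimized. Your conventions are consistent with those recalled in section 1.3 of the paper, and your identification of the support with $\Sk(X)$ is legitimate in the Calabi--Yau case, since every pluricanonical form is then a scalar multiple of a power of $\Omega$, so that $\Sk_{\Omega}(X) = \Sk(X)$.

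Two remarks on completeness rather than correctness. First, the technical core --- the uniform asymptotic analysis of the local integrals, including the control of unit factors, of the $O\big((\log \lvert f_U \rvert^{-1})^{-1}\big)$ discrepancy between $\Log_V$ and the chart maps $\Log_U$, and of partition-of-unity overlaps --- is asserted rather than carried out; you correctly single it out as the crux, and it is exactly what occupies the bulk of \cite{BJ}. Second, when passing from convergence on each $\X^{\text{hyb}}$ to convergence on $X^{\text{hyb}} = \varprojlim_{\X} \X^{\text{hyb}}$, one should note explicitly that the finite-level limit measures are compatible under the transition maps $r_{\X' \X}$ (which restrict to the identity on $\Sk(X)$), and that functions pulled back from finite levels are dense in $\mathcal{C}^0(X^{\text{hyb}})$ by Stone--Weierstrass; you gesture at this but it deserves a line in a complete write-up.
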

The above result is of particular interest in the case where the family is \emph{maximally degenerate} - that is, when $d=n$ in the above formula. In that case, the essential skeleton $\Sk(X)$ is expected to be the base of the SYZ fibration predicted by mirror symmetry. In particular, Kontsevich and Soibelman conjectured that the renormalized Kähler Ricci-flat metrics with diameter one on $X_t$ should collapse to $\Sk(X)$, endowed with a real Monge-Ampère metric, i.e. a metric $g_{ij}$ satisfying $\det(g_{ij}) =1$ in local $\Z$-affine coordinates (see \cite[Conj. 5.1]{KS} for a precise statement).
\\In the maximally degenerate case, Boucksom-Jonsson furthermore prove that after a finite base change, the limiting measure $\mu_0$ is a positive multiple of the Lebesgue measure on $\Sk(X)$, so that their result provide interesting evidence towards the Kontsevich-Soibelman conjecture.
\\Moreover, in view of this result, one would naturally be tempted to expect some continuity for the Monge-Ampère operators : if the family of measures $(\mu_t)_{t \in \de^*}$ converges on the hybrid space to a Radon measure $\mu$ on $X^{\text{an}}$ which satisfies the hypotheses of theorem 1.1, then the solutions $\omega_t \in c_1(L_t)$ of the Monge-Ampère equation $\omega_t^n = \mu_t$ converge in some sense as metrics on $L$ to the solution $\phi$ of the non-archimedean Monge-Ampère equation $\MA( \phi) = \mu$.
\\To define of hybrid convergence of metrics, we will reduce the problem to convergence of potentials, by fixing a natural class of metrics which have a non-archimedean counterpart, and making them hybrid-converge by definition. 
\\Those will be the Fubini-Study metrics: for any model $\Ld$ of our line bundle $L$, we may write $\Ld = \Ld_1 \otimes \Ld_2^{-1}$ as a difference of two ample line bundles $\Ld_1$ and $\Ld_2$, which yield embeddings $\iota_i : \X \hookrightarrow \C\CP^{N_i} \times \de$ by sections of $\Ld_i^{k_i}$.
\\This allows us to define a family of metrics on $L$, $\phi_t = k_1^{-1} \iota_1^* \phi_{FS,1} - k_2^{-1} \iota_2^* \phi_{FS,2}$, and similarly a model metric $\phi = \phi_{\Ld_1} - \phi_{\Ld_2}$ on $X^{\text{an}}$.
\begin{defn}
Let $\X /R$ be an snc model of $X$, with ample model $\Ld$ of $L$.
\\We say that the family of hermitian metrics on $L$ $\phi'_t = \phi_t + \psi_t$, with $\sup_{X_t} \psi_t =0$ converge \emph{strongly} to $\phi' = \phi + \psi$ with $\sup_{X^{\emph{an}}} \psi =0$ if the function:
$$ \Psi : X^{\emph{hyb}} \fl \R,$$
$$ z \mapsto \psi_t(z) \; \emph{if} \; z \in X_t,$$
$$ z \mapsto \psi(z) \; \emph{if} \; z \in X^{\emph{an}},$$
is continuous on $X^{\emph{hyb}}$.
\end{defn}
The above definition obviously implies that the Fubini-Study metrics $\phi_t$ converge strongly to the model metric $\phi$ on $X^{\text{an}}$ - on the level of local potentials this means that on the hybrid space, the Fubini-Study usual $\text{L}^2$-norm on standard coordinates for $\CP^N$ degenerates to the $\text{L}^{\infty}$-norm on these coordinates when $t \rightarrow 0$. This can be seen easily using for instance \cite[thm. 1.2]{Fav}.
\end{section}
\section{Metric and algebraic convergence}
In this subsection, $X \xrightarrow{\pi} \de^*$ will denote a meromorphic degeneration of canonically polarized manifolds. It follows from the seminal work of Aubin and Yau (\cite{Au}, \cite{Y}) that every fiber $X_t$ admits a unique negatively curved Kähler-Einstein metric $\omega \in -c_1(X_t)$, satisfying the equation $\Ric(\omega_t) = - \omega_t$. We will now explain how to understand the asymptotic behaviour of the $\omega_t$'s as $t \rightarrow 0$.
\\In this case, the Minimal Model Program provides us with a \emph{unique} canonical model $\X_c$ of $X$ over the disk, at the cost of going out of the class of simple normal crossing models, and allowing some slightly worse singularities. The appropriate class of varieties for the central fiber is a higher-dimensional analog of the stable curves, the correct notion being that of \emph{semi-log canonical models}.
\\If $\X$ is a normal model of $X$, saying that $\X_0$ is semi-log canonical (see for instance \cite{Kol}) is a condition on the singularities of the normalisation of $\X_0$, which can be seen as a mild generalization of the simple normal crossing condition; in particular we require $\X_0$ to be simple normal crossing in codimension 1. More precisely, the normalization morphism $\nu : \X^{\nu}_0 \fl \X_0$ is required to yield a disjoint union $\X_0^{\nu} = \sqcup_{i \in I} (\tilde{D}_i , C_i)$ of log canonical pairs, $C_i$ being the restriction of the \emph{conductor} $C$ of $\nu$ to $\tilde{D}_i$. This is a Weil divisor on $\X_0^{\nu}$, whose support is precisely the locus where $\nu$ fails to be an isomorphism, and which is simply given here by the inverse image by $\nu$ of the codimension one nodes of $\X_0$. It furthermore satisfies the formula : $\nu^*K_{\X_0} = K_{\X^{\nu}_0} + C$ (note that the canonical divisor of a semi-log canonical variety is assumed to be $\Q$-Cartier).
\\A semi-log canonical model (or \emph{stable variety}) is now by definition a proper semi-log canonical variety, with ample canonical divisor. Thus, as we mentioned above, one-dimensional semi-log canonical models are nothing but Deligne-Mumford's stable curves. 
\\The compactness theorem for stable varieties of higher dimension is now as follows:
\begin{theo}{(\cite{BCHM}, \cite{KNX}).}
\\Let $X \fl \de^*$ be an algebraic degeneration of canonically polarized manifolds. 
\\There exists (possibly after a finite base change) a unique \emph{canonical} model $\X_c / \de$ of $X$, satisfying the following properties:\\
i) the total space $\X_c$ has at worst canonical singularities, while the central fiber $\X_{c,0}$ is reduced and has at worst semi-log canonical singularities;\\
ii) the relative canonical divisor $K_{\X_c/ \de}$ is relatively ample.
\end{theo}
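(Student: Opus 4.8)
The plan is to realize $\X_c$ as the relative canonical model of a semistable reduction of $X$ and to read off its singularities from relative adjunction. First I would reduce to a convenient starting model: since the construction is birational and the statement allows a finite base change, I may apply the semistable reduction theorem \cite{KKMSD} to obtain an snc model $\X/\de$ with smooth total space and reduced central fiber $\X_0 = \sum_{i} D_i$. In particular $\X$ has terminal singularities and the pair $(\X, \X_0)$ is log smooth, hence log canonical. Because each fiber $X_t$ is canonically polarized, $K_{X_t}$ is ample, so $K_{\X/\de}$ is big on the generic fiber, i.e. relatively big over $\de$.

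The central input is then finite generation, as an $\gO_\de$-algebra, of the relative canonical ring
$$R(\X/\de, K_{\X/\de}) = \bigoplus_{m \ge 0} \pi_* \gO_\X(m K_{\X/\de}),$$
which is exactly the kind of statement supplied by \cite{BCHM}. Granting this, I would set $\X_c := \mathrm{Proj}_\de \, R(\X/\de, K_{\X/\de})$. By construction $K_{\X_c/\de}$ is relatively ample, yielding (ii), and uniqueness is automatic since the relative canonical ring is a birational invariant of $X/\de^*$, independent of the chosen semistable model. That the total space $\X_c$ has at worst canonical singularities follows from the $K_{\X/\de}$-MMP: starting from the terminal total space $\X$, the minimal model remains terminal, and the final contraction to the canonical model produces at worst canonical singularities. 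For the central fiber I would invoke relative adjunction: $\X_{c,0}$ is a reduced $\Q$-Cartier divisor and
$$K_{\X_{c,0}} = (K_{\X_c} + \X_{c,0})\big|_{\X_{c,0}}.$$
Since $\X_c$ is canonical and $\X_{c,0}$ is reduced, the pair $(\X_c, \X_{c,0})$ is log canonical, and inversion of adjunction in the non-normal setting then forces $\X_{c,0}$ to be semi-log canonical, giving (i).

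The main obstacle I anticipate is twofold. First, the finite generation of the \emph{relative} canonical ring over the trait $\Spec R$ is not literally the projective statement of \cite{BCHM}: one must either fix an algebraic model of $X$ over a curve, apply BCHM there, and then analytify, or appeal to the relative MMP over a one-dimensional base, checking that existence of flips and termination with scaling survive in this setting. Second, and more delicate, is controlling the behaviour of the central fiber under the MMP: a priori the process may create fibers with multiplicities, so that the reduced, slc nature of $\X_{c,0}$ is not formal. It is precisely here — in showing that the log canonical pair $(\X_c, \X_{c,0})$ has reduced, semi-log canonical central fiber, and that the semistable hypothesis propagates through the minimal model program — that the substance of \cite{KNX} is required.
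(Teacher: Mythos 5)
Your overall route is the same as the paper's, which is itself only a sketch: the paper also starts from a semi-stable model $\X$, sets $\X_c = \Proj_{\de} \bigoplus_{k \ge 0} H^0(\X, K^k_{\X/\de})$, identifies finite generation of the relative canonical algebra as the main difficulty, and cites \cite{BCHM} for the algebraic case and \cite{KNX} for the extension to families over the disk. Your reading of where those two references carry the weight is accurate.

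There is, however, one step in your derivation of (i) that is wrong as stated: from ``$\X_c$ is canonical and $\X_{c,0}$ is reduced'' you infer that the pair $(\X_c, \X_{c,0})$ is log canonical. This implication is false in general. For instance, the family $\{zy^2 = x^3 + tz^3\} \subset \CP^2 \times \de$ has smooth (hence terminal) total space and reduced central fiber, yet the central fiber is a cuspidal cubic, which is not semi-log canonical, and the pair is not log canonical near the cusp (the lct of a cusp is $5/6$); relative ampleness of the canonical bundle does not rescue the inference either, as degenerations of quintic surfaces with smooth total space and non-slc special fiber show. What actually makes the statement true is the principality of the central fiber: $\X_0 = \pi^*\mathrm{div}(t)$ is linearly trivial over $\de$, so $K_{\X/\de} + \X_0 \sim_{\de} K_{\X/\de}$ and every step of the relative $K_{\X/\de}$-MMP is also a step of the $(K_{\X/\de}+\X_0)$-MMP. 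Log canonicity of the log smooth pair $(\X,\X_0)$ on the semistable model is therefore preserved down to $(\X_c,\X_{c,0})$, and only then does adjunction --- not inversion of adjunction, which is the converse direction --- combined with the fact that $\X_{c,0}$ is Cartier in the Cohen--Macaulay space $\X_c$, hence $S_2$, yield that $\X_{c,0}$ is semi-log canonical. Note also that, contrary to the worry in your final paragraph, reducedness of $\X_{c,0}$ is formal: by the paper's Remark 2.1 the map $\X \dashrightarrow \X_c$ is a birational contraction, so every component of $\X_{c,0}$ is the strict transform of some $D_i$, and since the map is an isomorphism at the generic point of such a divisor, $\ord_{D_i}(t)=1$ is unchanged; the genuinely non-formal input from \cite{KNX} is the log canonicity bookkeeping above in the analytic setting, not the multiplicities.
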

The canonical model is constructed as follows: starting from a semi-stable model $\X / \de$ of $X$, we set $\X_c = \Proj_{\de} \bigoplus_{k \ge 0} H^0 (\X, K^k_{\X/ \de})$, the main difficulty being to prove finite-generatedness of the relative canonical algebra. This is established in \cite{BCHM} when $X / \de^*$ is algebraic, and extented to families over the disk in \cite{KNX}.
\begin{rem}
If $\X$ is any semi-stable model of $X$, then the natural rational map $h : \X \dashrightarrow \X_c$ is in fact a \emph{rational contraction} - this means that its inverse does not contract any divisors.
\end{rem}
The complete understanding of the Gromov-Hausdorff convergence of the fibers $(X_t, g_t)$, is due to J. Song \cite{S} (whose results were further improved very recently in \cite{SSW}). The crucial, first step, is to show that there exists on the central fiber $\X_{c,0}$ of the canonical model of $X$ a unique Kähler-Einstein current $\omega_{KE}$, and to derive some geometric estimates on the singularities of this current. The current $\omega_{KE}$ on the stable variety $\X_{c,0}$ was first constructed by Berman-Guenancia \cite{BG} using a variational method, while it is reconstructed in \cite{S} using the techniques of \cite{EGZ}, \cite{Kolo}, in order to obtain some stronger control on its singularities:
\begin{theo}{(\cite[thm. 1.1]{S}).}
\\Let $\X_c \fl \de$ be the canonical model of $X$, with semi-log canonical central fiber $\X_{c,0}$.
\\There exists a unique Kähler current $\omega_{KE} \in - c_1(\X_{c,0})$ on $\X_{c,0}$, satisfying the following properties:
\\i) $\omega_{KE}$ is smooth and satisfies the Kähler-Einstein equation on the regular locus of $\X_{c,0}$;
\\ii) $\omega_{KE}$ has locally bounded potentials away from the locus where $\X_{c,0}$ is not log terminal;
\\iii) $\omega_{KE}^n$ does not charge mass on the singularities of $\X_{c,0}$, and $\int_{\X_{c,0}} \omega_{KE}^n = [K_{\X_{c,0}}]^n$.
\end{theo}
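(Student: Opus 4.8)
The plan is to solve the Kähler–Einstein equation as a degenerate complex Monge–Ampère equation, producing a weak solution by the variational method of Berman–Guenancia \cite{BG} and then upgrading its regularity by the pluripotential estimates of \cite{EGZ} and \cite{Kolo}. Since the stable variety $\X_{c,0}$ need not be normal, the first step is to pass to its normalization $\nu : \X_{c,0}^\nu = \sqcup_{i \in I}(\tilde D_i, C_i) \fl \X_{c,0}$, which by the semi-log canonical hypothesis exhibits $\X_{c,0}$ as a gluing along the conductor $C$ of log canonical pairs with ample log canonical divisor $K_{\tilde D_i} + C_i$; here adjunction gives $\nu^* K_{\X_{c,0}} = K_{\X_{c,0}^\nu} + C$. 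I would solve the problem on each pair and then check that the resulting potentials are compatible across $C$ — compatibility being forced by the uniqueness of the solution on each component — so as to reconstruct a genuine current on $\X_{c,0}$.

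On a single log canonical pair $(\tilde D_i, C_i)$ I would fix, using ampleness, a smooth positive reference form $\omega_0 \in c_1(K_{\tilde D_i} + C_i)$ together with an adapted measure $\mu$, namely the canonical volume form twisted by the conductor, which acquires logarithmic poles along $C_i$. The Kähler–Einstein equation $\Ric(\omega) = -\omega$ then translates into the scalar equation
$$(\omega_0 + dd^c \phi)^n = e^{\phi}\, \mu,$$
to be solved for a potential $\phi$ in the finite-energy class $\mathcal E^1$. Because the curvature sign is negative, the associated energy functional is coercive, and the variational argument of \cite{BG} yields a minimizer, which is a weak pluripotential solution.

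The regularity assertions then follow from the fine structure of $\mu$. Away from the non-log-terminal locus, the conductor contributes only an $L^p$ density for some $p>1$, so Kołodziej's $L^\infty$-estimate and the stability results of \cite{EGZ} give local boundedness of $\phi$ (property (ii)); on the regular locus the data are smooth, and elliptic bootstrapping promotes $\phi$ to a smooth solution of the Kähler–Einstein equation (property (i)). That $\omega_{KE}^n$ charges no mass on the singular set and integrates to $[K_{\X_{c,0}}]^n$ (property (iii)) follows from the boundedness of $\phi$ on the log terminal locus, which makes $\omega_{KE}^n$ ignore the pluripolar remaining locus, the total mass being computed cohomologically in $c_1(K_{\X_{c,0}})$. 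Uniqueness comes from the strictly convex dependence of the energy along finite-energy geodesics, equivalently from the comparison principle applied to the monotone right-hand side $e^{\phi}\mu$.

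The main obstacle I anticipate is precisely the analysis at the strictly log-canonical (non-klt) locus. There the adapted measure $\mu$ genuinely blows up and the potential tends to $-\infty$, so the soft variational existence yields no control whatsoever; obtaining the sharp boundedness away from this set, establishing the non-charging property, and — in the non-normal situation — verifying the compatibility of the separately constructed solutions across the conductor all require the refined estimates of \cite{EGZ} and \cite{Kolo}, and constitute the genuine technical heart of the argument.
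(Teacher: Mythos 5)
Your proposal is correct and takes essentially the same route as the paper: the paper does not prove this statement itself but quotes it from Song \cite{S}, recording that the current was first constructed by the variational method of Berman--Guenancia \cite{BG} and then reconstructed with the pluripotential techniques of \cite{EGZ}, \cite{Kolo} to control its singularities. Your sketch --- normalization into the log canonical pairs $(\tilde{D}_i, C_i)$ with ample $K_{\tilde{D}_i}+C_i$, variational existence for the negative-curvature Monge--Amp\`ere equation on each pair, boundedness away from the non-log-terminal locus via Ko{\l}odziej/\cite{EGZ}-type estimates, and descent of the resulting currents back to $\X_{c,0}$ --- is precisely the construction the paper summarizes in Remark 2.2 and the paragraph preceding the theorem.
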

\begin{rem}
The fact that the above Kähler-Einstein current on $\X_{c,0}$ matches the one constructed in \cite{BG}, follows from the uniqueness statement in \cite[thm. A]{BG}. Moreover, the construction of \cite{BG} implies that $\int_{D_i} \omega_{KE}^n = (K_{\X_c}^n \cdot D_i)$. 
\\Indeed, if $\nu : \X^{\nu}_{c,0} \fl \X_{c,0}$ denotes the normalization morphism, where $\X_{c,0}^{\nu} = \sqcup_{i \in I} \tilde{D}_i$, then the Kähler-Einstein metric $\omega_{KE}$ is obtained by descending the (singular) Kähler-Einstein metrics $\omega_i \in c_1(K_{\tilde{D}_i} + C_i)$ on the log canonical pairs $(\tilde{D_i}, C_i)$, $C_i$ being the restriction of the conductor $C$ of $\nu$ to $\tilde{D}_i$.
\\Thus by construction, the mass $\int_{D_i} \omega_{KE}^n$ equals the intersection number $(K_{\tilde{D}_i} + C_i)^n = (\nu^* K_{\X_{c,0}})^n$, the last intersection number being computed on $\tilde{D}_i$. Applying the projection formula, this is equal to the intersection number $ K_{\X_{c,0}}^n \cdot D_i =K_{\X_c}^n \cdot D_i$, by adjunction and principality of $\X_{c,0}$.
\end{rem}
Let us now fix a $k>0$ such that $kK_{\X_c/\de}$ is relatively very ample, and a relative embedding $\iota : \X_c \hookrightarrow \CP^N \times \de$ of the canonical model inside projective space by sections of $k K_{\X_c/ \de}$. We write $\theta_t = k^{-1} \iota_t^*\omega_{FS}$ the smooth family of reference metrics induced on $\X_c$ via pull-back of the Fubini-Study metric on $\CP^N$, and $\nu_t = \theta_t^n$ for the associated reference measures.
\\This allows us to write $\omega_{t} = \theta_t + dd^c \psi_t$ and $\omega_{KE} = \theta_0 + dd^c \psi_0$, with $\psi_t \in \mathcal{C}^{\infty}(X_t)$ and $\psi_0 \in \PSH(\X_{c,0}, \chi)$. We normalize $\psi_t$ by imposing the condition $\int_{X_t} e^{\psi_t} \theta_t^n =1$, and similarly for $\psi_0$, so that this guarantees uniqueness of the potentials.
\\In order to apply Cheeger-Colding theory to the Kähler-Einstein metrics on $X_t$, Song derives uniform estimates on volumes of small balls, which are obtained via comparison lemmas for volume forms. The estimates we will need in the sequel are the following :
\begin{prop}{(\cite[lem. 4.5, cor. 4.1]{S})}
\\We have a uniform bound: $\sup_{X_t} \psi_t \le C$.
\\Furthermore, for any compact $K \subset \X_c$ disjoint from $\Sing(\X_{c,0})$, the following bounds hold:
\begin{itemize} 
\item $\inf_{X_t} \psi_t \ge - C_K$, 
\item $\lVert \psi_t \rVert_{\mathcal{C}^k(K)} \le C_{K,k}$.
\end{itemize}
This implies smooth convergence of the Kähler-Einstein metrics $\omega_{t}$ to $\omega_{KE}$ on $\X_{c,0} \backslash \Sing(\X_{c,0})$ in the following sense:
\\for any point $p \in \X_{c,0} \backslash \Sing(\X_{c,0})$, and any choice of neighbourhood $\U$ of $p$ such that the $U_t = \U \cap X_t$ are all biholomorphic to $U_0$, and such that the $\theta_t$'s are uniformly equivalent, the $\psi_t$ converge in the $\mathcal{C}^{\infty}$ sense to $\psi_0$.
\end{prop}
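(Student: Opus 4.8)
The strategy is to recast the Kähler-Einstein condition as a family of complex Monge-Ampère equations with respect to the reference metrics $\theta_t$, to establish a global $\mathcal{C}^0$ upper bound by the maximum principle and local estimates of every order away from $\Sing(\X_{c,0})$ by the usual pluripotential and elliptic machinery, and then to pass to the limit. Since $\theta_t$ represents $c_1(K_{X_t}) = -c_1(X_t)$, one may fix a smooth family of volume forms $\Omega_t$ on the $X_t$ with $\Ric(\Omega_t) = -\theta_t$; writing $\omega_t = \theta_t + dd^c \psi_t$, the equation $\Ric(\omega_t) = -\omega_t$ becomes, for an appropriate choice of $\Omega_t$, the complex Monge-Ampère equation
$$(\theta_t + dd^c \psi_t)^n = e^{\psi_t}\, \Omega_t.$$
Because the central fiber is only semi-log canonical, the adapted volume form $\Omega_t$ degenerates along the non-log-terminal locus and near $\Sing(\X_{c,0})$, which is the source of all the difficulty.

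For the upper bound, the maximum principle gives $dd^c \psi_t \le 0$ at an interior maximum, hence $e^{\psi_t}\Omega_t = \omega_t^n \le \theta_t^n$ there, so $e^{\sup_{X_t}\psi_t} \le \sup_{X_t}(\theta_t^n/\Omega_t)$; the right-hand side is uniformly bounded, as the density $\theta_t^n / \Omega_t$ stays bounded above (it even tends to zero along the degeneration locus). Together with the mass normalization, this yields a uniform $L^1$ bound on the $\theta_t$-plurisubharmonic potentials $\psi_t$ by compactness of normalized families of quasi-plurisubharmonic functions. For the lower bound on a compact $K$ disjoint from $\Sing(\X_{c,0})$, I would localize: there $\theta_t$ is uniformly non-degenerate, and $\psi_t \le C$ forces $\omega_t^n \le e^C\Omega_t \le C_K\, \theta_t^n$ on $K$ with a $t$-uniform constant, so a local $\mathcal{C}^0$ estimate of Ko{\l}odziej type (via local capacity estimates, or equivalently a comparison/barrier argument) gives $\inf_K \psi_t \ge -C_K$.

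The higher-order bounds $\lVert \psi_t \rVert_{\mathcal{C}^k(K)} \le C_{K,k}$ then follow by bootstrapping. A localized Aubin-Yau second-order estimate, applying the maximum principle to $\log \mathrm{tr}_{\theta_t}\omega_t - A\psi_t$ cut off near $\partial K$, produces a uniform Laplacian bound: here the Kähler-Einstein identity $\mathrm{tr}_{\theta_t}\Ric(\omega_t) = -\mathrm{tr}_{\theta_t}\omega_t$ supplies a favorable sign and only the bisectional curvature of $\theta_t$ enters, which is uniformly bounded on $K$ as $t \to 0$. This gives a uniform two-sided bound on $\omega_t$; Evans-Krylov then yields a $\mathcal{C}^{2,\alpha}$ estimate and Schauder bootstrapping the $\mathcal{C}^k$ estimates. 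Finally, smooth convergence follows by compactness: Arzel\`a-Ascoli extracts $\mathcal{C}^\infty_{\mathrm{loc}}$ limits of the $\psi_t$ on $\X_{c,0} \setminus \Sing(\X_{c,0})$ along subsequences, each limit solves the limiting Monge-Ampère equation on the regular locus, and by uniqueness of the Kähler-Einstein potential $\psi_0$ every limit coincides with $\psi_0$, so $\psi_t \to \psi_0$ in $\mathcal{C}^\infty_{\mathrm{loc}}$.

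The main obstacle is the uniformity of all constants as $t \to 0$ near $\Sing(\X_{c,0})$: the reference measure $\Omega_t$ genuinely degenerates there, so the global $L^\infty$ estimate fails and the local $\mathcal{C}^0$ bound must be obtained with a $t$-independent constant. This requires uniform integrability (an $L^p$-density control for $\Omega_t/\theta_t^n$, or a uniform capacity estimate) furnished by comparison arguments, which is the technical heart of Song's proof; likewise the $\mathcal{C}^2$ estimate rests on uniform geometric bounds for $\theta_t$ on compacts, and both ultimately reduce to the smooth convergence $\theta_t \to \theta_0$ and the log terminal structure of $\X_c$ off $\Sing(\X_{c,0})$.
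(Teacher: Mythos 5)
The paper itself contains no proof of this proposition: it is imported directly from Song's work (\cite[lem. 4.5, cor. 4.1]{S}), the surrounding text only fixing the notation $\theta_t$, $\psi_t$ and the normalization. So the comparison is really with Song's argument, and your sketch reconstructs its architecture faithfully: the reformulation as $(\theta_t + dd^c\psi_t)^n = e^{\psi_t}\Omega_t$, the maximum-principle upper bound (which works precisely because the adapted density $\Omega_t/\theta_t^n$ is uniformly bounded \emph{below} --- it blows up, rather than vanishes, near the non-log-terminal locus of the central fiber), local $\mathcal{C}^0$ control on compacts away from $\Sing(\X_{c,0})$, a localized Laplacian estimate exploiting the good sign coming from $\Ric(\omega_t) = -\omega_t$, Evans--Krylov plus Schauder bootstrapping, and finally compactness.

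One step, however, is stated too quickly and would fail as written: the identification of every subsequential $\mathcal{C}^{\infty}_{\mathrm{loc}}$ limit with $\psi_0$. The uniqueness theorem for the K\"ahler--Einstein current on $\X_{c,0}$ (Berman--Guenancia, recalled as Theorem 2.2 in the paper) holds within a class of currents specified by global conditions: locally bounded potentials away from the non-log-terminal locus, no mass on the singular set, and full Monge--Amp\`ere mass. A limit produced by Arzel\`a--Ascoli on $\X_{c,0} \setminus \Sing(\X_{c,0})$ only solves the equation on the regular part and is not a priori in this class --- mass could escape to $\Sing(\X_{c,0})$ along the degeneration, leaving a solution with smaller total mass. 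What rules this out in Song's proof are the $t$-uniform barrier estimates $\psi_t \ge \eps \log\lvert \sigma \rvert - C_{\eps}$ (the family version of the paper's Lemma 2.1, i.e. \cite[lem. 3.1, 3.2]{S}), which pass to the limit and place it inside the uniqueness class. You do flag uniform integrability near $\Sing(\X_{c,0})$ as the technical heart, but you attribute its role only to the local $\mathcal{C}^0$ bound; it is equally indispensable for pinning down the limit, and without it the appeal to uniqueness is vacuous.
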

\begin{rem}
Even if we will not need it here, one can show that the previous theorem combined with a uniform non-collapsing condition implies pointed Gromov-Hausdorff convergence of $X_t$ to a compact metric space, whose regular part (in the Cheeger-Colding sense) is precisely $(\X_{c,0} \backslash \Sing(\X_{c,0}), \omega_{KE})$.
\\The behaviour of the metrics in the region where the metric collapses is also well-understood, under the technical assumption that the canonical model is semi-stable, see \cite{Zh}.
\end{rem}
We conclude with a lemma about the singularities of the potential $\psi_0$, this will be useful to us when discussing the behaviour of the Kähler-Einstein metrics in a non-archimedean perspective:
\begin{lem}{(\cite[lem. 3.1, 3.2]{S})}
\\Let $z \in \X_0$ such that $\X_0$ is log terminal in a neighbourhood of $z$. There exists $\ell>0$ and a global section $\sigma \in H^0(\X_0, \ell K_{\X_0})$, non-vanishing at $z$ and such that the non-log terminal locus of $\X_0$ is contained in the vanishing locus of $\sigma$.
\\Furthermore, the singularities of $\psi_0$ are milder than any log poles, in the following sense: for any $\eps>0$ and $z$ as above, there exists $\sigma$ as above and $C>0$ such that:
$$C \ge \psi_0 \ge -C_\eps + \eps \log \lvert \sigma \rvert.$$ 
\end{lem}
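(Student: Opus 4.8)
The plan is to treat the two assertions separately: the existence of the pluricanonical section $\sigma$ is a purely algebro-geometric statement about the ample $\Q$-line bundle $K_{\X_0}$, whereas the lower bound on $\psi_0$ is a problem in singular pluripotential theory for the complex Monge--Ampère equation solved by $\omega_{KE}$. For the first part, recall that on the central fiber $\X_0$ of the canonical model the divisor $K_{\X_0}$ is $\Q$-Cartier and ample, being the restriction of the relatively ample $K_{\X_c/\de}$. The non-log-terminal locus $Z \subset \X_0$ is a proper closed subscheme not containing the given log terminal point $z$. Choosing $\ell$ divisible by the Cartier index of $K_{\X_0}$ and large enough, $\ell K_{\X_0}$ is very ample and the twisted sheaf $\ell K_{\X_0}\otimes \mathcal I_Z$ is globally generated; a general global section of this sheaf vanishes along $Z$ while being nonzero at $z$, and this is the desired $\sigma$. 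The upper bound $\psi_0\le C$ is immediate, since $\psi_0$ is $\theta_0$-plurisubharmonic on the compact space $\X_0$ and hence bounded above.

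For the lower bound I would fix the smooth reference form $\theta_0\in c_1(K_{\X_0})$ together with the induced metric $h$ on $K_{\X_0}$, and set $u_\sigma = \frac1\ell\log|\sigma|^2_{h^\ell}$, normalised so that $u_\sigma\le 0$. By the Poincaré--Lelong formula $dd^c u_\sigma = \frac1\ell[\{\sigma=0\}] - \theta_0 \ge -\theta_0$, so $u_\sigma\in\PSH(\X_0,\theta_0)$, and the auxiliary potential $\phi := \eps u_\sigma$ lies in $\PSH(\X_0,\theta_0)$ as well. The Kähler--Einstein current solves $(\theta_0 + dd^c\psi_0)^n = e^{\psi_0}\mu_0$, where $\mu_0$ is the adapted measure descended from the normalisation $\sqcup_i(\tilde D_i,C_i)$; its density with respect to $\theta_0^n$ has only log-canonical-type poles along the conductor and the rest of $Z$, so it is integrable but not $L^p$. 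The key computation is that the regularised measure $\mu_\eps := e^\phi\mu_0 = |\sigma|^{2\eps}\mu_0$ does admit an $L^{p(\eps)}$ density for some $p(\eps)>1$: multiplying a log-canonical pole by any strictly positive power of $|\sigma|$, which vanishes along $Z$, restores $L^p$-integrability.

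The lower bound $\psi_0 \ge \phi - C_\eps$ then follows from the comparison principle of pluripotential theory on $\X_0$. Consider the open sublevel sets $U(s) = \{\psi_0 < \phi - s\}$; since $\phi\le 0$ one has $\psi_0 < -s$ there, so $e^{\psi_0}<e^{-s}$, and the comparison principle gives $(1-\eps)^n\,\mathrm{Vol}_{\theta_0}(U(s)) \le \int_{U(s)}(\theta_0+dd^c\psi_0)^n < e^{-s}\mu_0(\X_0)$, whence $\mathrm{Vol}_{\theta_0}(U(s))$ decays exponentially in $s$. Writing $(\theta_0+dd^c\psi_0)^n = e^{\psi_0-\phi}\mu_\eps \le \mu_\eps$ on $U(s)$ and invoking the $L^p$ bound on $\mu_\eps$ together with the capacity--volume inequalities, Kołodziej's iteration scheme upgrades this volume decay into the emptiness of $U(s)$ for $s\ge C_\eps$. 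This yields $\psi_0 \ge \eps u_\sigma - C_\eps$, which after relabelling the constant and $\eps$ is exactly $\psi_0\ge -C_\eps + \eps\log|\sigma|$.

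The main difficulty lies entirely in this last step: the comparison and capacity machinery must be run on the singular semi-log canonical variety $\X_0$ rather than on a smooth manifold, so one must either work on the log terminal locus and control the behaviour across $Z$, or appeal to the singular pluripotential theory of Eyssidieux--Guedj--Zeriahi and Kołodziej on normal varieties. Equally delicate is the precise book-keeping of the log-canonical poles of $\mu_0$ along the conductor, which is what guarantees that $|\sigma|^{2\eps}\mu_0$ becomes $L^p$ for every $\eps>0$; this is where the semi-log canonical structure of $\X_0$ and the adjunction formula $\nu^*K_{\X_0}=K_{\X_0^\nu}+C$ enter in an essential way.
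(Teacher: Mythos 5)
The paper itself offers no proof of this lemma --- it is imported verbatim from Song \cite[lem.~3.1, 3.2]{S} --- so your proposal has to be judged on its own merits. Its algebro-geometric half is correct: $K_{\X_0}$ is $\Q$-Cartier and ample, so Serre's theorem applied to $\mathcal{I}_Z \otimes \gO_{\X_0}(\ell K_{\X_0})$ for $\ell$ large and sufficiently divisible yields a section $\sigma$ vanishing on the non-log-terminal locus $Z$ and non-vanishing at $z$; and the upper bound $\psi_0 \le C$ is immediate. Your strategy for the lower bound --- twist the adapted measure by $\lvert \sigma \rvert^{2\eps}$ to restore an $L^p$ density, then invoke Kolodziej-type pluripotential estimates --- is also the right one, and is essentially what Song and Berman--Guenancia do.

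The execution, however, has a genuine gap, and it originates in the sentence claiming that the adapted measure $\mu_0$ is ``integrable but not $L^p$''. It is neither: log-canonical poles are of type $\lvert z \rvert^{-2}$ (by adjunction $\nu^* K_{\X_0} = K_{\X_0^{\nu}} + C$, near a general point of the conductor the adapted measure looks like a smooth factor times $i\,dz \wedge d\bar{z}/\lvert z \rvert^2$), which is \emph{not} locally integrable. Hence $\mu_0(\X_0) = +\infty$ whenever the non-log-terminal locus is non-empty --- the only case in which the lemma has content --- and this infinite mass is precisely why $\psi_0 \to -\infty$ there (theorem 2.2, (ii)) and why the lemma is needed at all. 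As a consequence, your volume-decay display, whose right-hand side is $e^{-s}\mu_0(\X_0)$, is vacuous, and the claimed exponential decay of $\Vol_{\theta_0}(U(s))$ does not follow as written; the usable bound on $U(s)$ is $e^{\psi_0}\mu_0 \le e^{-s} e^{\phi}\mu_0 = e^{-s}\mu_\eps$, which you only bring in afterwards. More seriously, the final appeal to ``Kołodziej's iteration scheme'' is a black box exactly where the difficulty sits: the standard iteration bounds a quasi-psh function below by a \emph{constant}, whereas your exceptional sets $U(s) = \{\psi_0 < \phi - s\}$ are sublevel sets relative to the \emph{unbounded} weight $\phi = \eps u_\sigma$; moreover both $\psi_0$ and $\phi$ are unbounded, and $\phi$ does not even have full Monge--Amp\`ere mass (its non-pluripolar mass is $(1-\eps)^n (K_{\X_0})^n$), so each comparison-principle step requires justification on the non-normal slc space. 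The standard way to make your idea rigorous replaces the iteration by a barrier: solve the auxiliary equation $((1-\eps)\theta_0 + dd^c \phi_\eps)^n = e^{\phi_\eps}\mu_\eps$, whose right-hand side has finite mass and $L^{p(\eps)}$ density, so that Kolodziej/EGZ gives $\lVert \phi_\eps \rVert_{L^\infty} \le C_\eps$; then $v = \phi_\eps + \eps u_\sigma$ is $\theta_0$-psh, since $\theta_0 + dd^c v = ((1-\eps)\theta_0 + dd^c\phi_\eps) + \tfrac{\eps}{\ell}[\sigma = 0] \ge 0$, and satisfies $(\theta_0 + dd^c v)^n \ge e^{\phi_\eps}\mu_\eps = e^{v}\mu_0$, i.e.\ it is a subsolution; the comparison principle for the equation $(\theta_0 + dd^c \, \cdot \,)^n = e^{(\cdot)}\mu_0$ --- which applies without normalizing constants thanks to the monotone zeroth-order term --- then yields $\psi_0 \ge v \ge \eps u_\sigma - C_\eps$, which is the assertion.
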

\section{Convergence of measures}
\subsection{Proof of theorem A}
We first deduce theorem A from theorem B. This boils down to the following proposition: 
\begin{prop}
Let $\X_c \fl \de$ be a stable degeneration of canonically polarized manifolds, with central fiber $\X_{c, 0} = \sum_{i \in I} D_i$.
\\The Kähler-Einstein measures $\mu_t = \omega_{t}^n$ on $\X_c$ converge weakly to the measure $\omega_{KE}^n$ as $t \rightarrow 0$.
\\Furthermore, we have $\int_{D_i} \omega_{KE}^n  = (K_{\X_c}^n \cdot D_i)$ for any $i \in I$.
\end{prop}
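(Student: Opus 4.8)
The plan is to handle the two assertions in turn, the bulk of the work being the weak convergence. For the mass identity I would simply unwind the Berman--Guenancia construction recalled in the Remark above: writing $\nu : \X_{c,0}^{\nu} = \sqcup_{i} \tilde D_i \to \X_{c,0}$ for the normalisation, the current $\omega_{KE}$ descends from the singular Kähler--Einstein metrics $\omega_i \in c_1(K_{\tilde D_i} + C_i)$ on the log canonical pairs $(\tilde D_i, C_i)$. Hence $\int_{D_i}\omega_{KE}^n$ is the self-intersection $(K_{\tilde D_i}+C_i)^n = (\nu^* K_{\X_{c,0}})^n$ computed on $\tilde D_i$, which by the projection formula, adjunction and principality of $\X_{c,0}$ equals $K_{\X_{c,0}}^n\cdot D_i = K_{\X_c}^n\cdot D_i$. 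The real content is the weak convergence $\mu_t \to \omega_{KE}^n$ on $\X_c$, and here the strategy is to promote the smooth convergence away from the singularities to a statement about the full measures, the only thing to rule out being a concentration of mass on $\Sing(\X_{c,0})$ in the limit.

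First I would record that the two families share one and the same constant total mass. Since $\omega_t \in c_1(K_{X_t})$, the integral $\int_{X_t}\omega_t^n = (K_{X_t})^n = (K_{\X_c/\de})^n\cdot [X_t]$ is an intersection number and so is independent of $t$; evaluating on the central fibre and using adjunction $K_{\X_c/\de}|_{\X_{c,0}} = K_{\X_{c,0}}$ (principality of $\X_{c,0}$) identifies it with $M := K_{\X_{c,0}}^n$. On the other side, part iii) of Song's theorem gives both $\int_{\X_{c,0}}\omega_{KE}^n = [K_{\X_{c,0}}]^n = M$ and the essential fact that $\omega_{KE}^n$ charges no mass on $\Sing(\X_{c,0})$. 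Thus I am comparing two families of measures of equal constant mass $M$, whose candidate limit ignores the singular locus.

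The convergence argument then runs as follows. Away from $\Sing(\X_{c,0})$ Song's estimates recalled above supply, through the biholomorphisms $U_t \simeq U_0$ and the $\mathcal{C}^{\infty}$ convergence $\psi_t \to \psi_0$, the $\mathcal{C}^{\infty}$ convergence of densities $\omega_t^n = (\theta_t + dd^c\psi_t)^n \to (\theta_0 + dd^c\psi_0)^n = \omega_{KE}^n$; covering a compact subset of the regular locus by finitely many adapted charts and using a partition of unity yields $\int_{U_\delta}\omega_t^n \to \int_{U_\delta}\omega_{KE}^n$ for any open $U_\delta$ with compact closure in $\X_c\setminus\Sing(\X_{c,0})$. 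Fixing a continuous test function $f$ on $\X_c$ and a continuous cutoff $\chi_\delta$ equal to $1$ on $U_\delta$ and supported away from a shrinking neighbourhood $W_\delta$ of $\Sing(\X_{c,0})$, the piece $\int f(1-\chi_\delta)\,\omega_t^n$ converges to $\int f(1-\chi_\delta)\,\omega_{KE}^n$ by the local convergence, so everything comes down to the remaining piece $\int f\chi_\delta\,\omega_t^n$, bounded by $\|f\|_\infty\,\mu_t(W_\delta)$.

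Controlling $\mu_t(W_\delta)$ uniformly in $t$ is the step I expect to be the crux, and it is precisely here that the conservation of mass does the work. Choosing $U_\delta$ disjoint from $W_\delta$ and using $\mu_t(X_t)=M$ gives $\mu_t(W_\delta)\le M - \mu_t(U_\delta)$, whence $\limsup_{t\to 0}\mu_t(W_\delta)\le M - \omega_{KE}^n(U_\delta) = \omega_{KE}^n(\X_{c,0}\setminus U_\delta)$ by the local convergence on $U_\delta$; letting $U_\delta$ exhaust the regular locus, the right-hand side tends to $\omega_{KE}^n(\Sing(\X_{c,0})) = 0$. Hence $\limsup_{t\to0}\mu_t(W_\delta)\to 0$ as $\delta \to 0$, the offending piece is negligible, and letting first $t\to0$ and then $\delta\to0$ produces $\int_{X_t}f\,\omega_t^n \to \int_{\X_{c,0}}f\,\omega_{KE}^n$, i.e. the asserted weak convergence. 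I would emphasise that the analytic mechanism behind this non-concentration is Song's a priori control of the potentials, namely the upper bound $\sup_{X_t}\psi_t\le C$ together with the mild (sub-logarithmic) singularities of $\psi_0$ along the non-log-terminal locus; but the soft conservation-of-mass argument above is what turns these estimates into the clean statement, without having to track the adapted reference measure near the singularities.
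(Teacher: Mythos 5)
Your proof is correct and follows essentially the same route as the paper: smooth convergence of the potentials away from $\Sing(\X_{c,0})$ (Proposition 2.1) gives convergence of the measures on the regular locus, the fact that $\omega_{KE}^n$ charges no mass on $\Sing(\X_{c,0})$ together with conservation of the total mass $(K_{\X_{c,0}})^n$ rules out concentration near the singularities, and the mass identity $\int_{D_i}\omega_{KE}^n = (K_{\X_c}^n\cdot D_i)$ is read off from Remark 2.2 exactly as you do --- the only difference is that you spell out the non-concentration step, which the paper leaves implicit in its reduction to test functions compactly supported away from the singular locus (and handles locally by dominated convergence via the bound $\sup_{X_t}\psi_t\le C$ rather than by your cutoff argument). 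One harmless labeling slip: with $\chi_\delta$ equal to $1$ on $U_\delta$ and supported away from $W_\delta$, the piece handled by local convergence should be $\int f\chi_\delta\,\omega_t^n$ and the piece bounded by $\lVert f\rVert_\infty\,\mu_t(W_\delta)$ should be $\int f(1-\chi_\delta)\,\omega_t^n$, i.e.\ the roles of $\chi_\delta$ and $1-\chi_\delta$ are exchanged in that sentence, as your subsequent paragraph makes clear.
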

\begin{proof}
As the measure $\omega^n_{KE}$ does not charge mass on singularities of $\X_{c,0}$, it is enough to show that for any open chart $\U$ of $\X_c \backslash \Sing(\X_{c,0})$ and $\eta \in \mathcal{C}^0_c(\U)$, we have: 
$$\int_{X_t} \eta \; \omega_{KE,t}^n \xrightarrow[t \rightarrow 0]{} \int_{\X_0^{sm}} \eta \; \omega_{KE}^n.$$
By the second item of proposition 2.1, we can find on $\U$ a family of biholomorphisms $\beta_t : U_0 \xrightarrow{\sim} U_t$ such that $\beta_t$ converge smoothly to identity, and such that $\beta_t^*\phi_t$ converge locally uniformly to $\phi_0$. 
\\The uniform bounds on $\sup \phi_t$ and $\sup \phi_0$ imply that the Kähler-Einstein measures $\omega_{t}^n = e^{\phi_t} \chi_t^n$ are uniformly bounded from above, and we conclude by dominated convergence.
\\The statement about the mass of $\omega^n_{KE}$ follows from remark 2.2.
\end{proof}
\subsection{Proof of theorem B}
Now let $X \fl \de^*$ be a meromorphic degeneration of compact complex manifolds, together with a normal model $\X_{\infty} \fl \de$ with reduced special fiber.
\\We assume that the family $(\mu_t)_{t \in \de^*}$ converges weakly on $\X_{\infty}$ to a measure $\mu_{\infty}$ on $\X_{\infty, 0}$, such that $\mu_{\infty}$ does not charge mass on the singular locus of $\X_0$.
\\We now want to compute the weak limit of the probability measures $\mu_t$ on $X^{\text{hyb}}$, under the assumption that we have weak convergence to $\mu_{\infty}$ on the normal model $\X_{\infty}$. Using the homeomorphism $X^{\text{hyb}} \simeq \varprojlim_{\X} \X^{\text{hyb}}$, this boils down to computing the weak limit of $\mu_t$ on $\X^{\text{hyb}}$ for every snc model $\X/ \de$ of $X$. 
\\Let $\X$ be an arbitrary model of $X$, and consider the closure $\Gamma$ inside $\X \times_{\de} \X_{\infty}$ of the graph of the rational map $\X \dashrightarrow \X_{\infty}$. Taking normalization of $\Gamma$ and  then applying Hironaka's resolution of singularities, we obtain an snc model $\X'$ of $X$, dominating both $\X$ and $\X_{\infty}$. This implies that snc models dominating $\X_{\infty}$ are cofinal in the category of models of $X$, so that to compute the hybrid limit of $\mu_t$'s, it is enough to compute its weak limit on $\X^{\text{hyb}}$ for any snc model $\X$ dominating $\X_{\infty}$. Let us fix such a model $\X$ for the rest of the section.
\\Write the central fiber of our model $\X_{\infty}$ of $X$ as $\X_{\infty, 0}= \sum_{ i \in I} D_i$. The total space $\X_{\infty}$ being normal, the local rings $\gO_{\X_{\infty}, \eta_{D_i}}$ are discrete valuation rings, so that each $D_i$ induces a divisorial valuation $v_{D_i} \in X^{\text{an}}$. These valuations are centered on the strict transforms $D'_i$ of the $D_i$ on $\X$.
\\The archimedean limit of the measures $\mu_t$ on $\X$ can be described as follows. Define the measure $h^*\mu_{\infty}$ on $\X_0$ by pulling back the top degree current $\mu_{\infty}$ on the smooth locus of $\X_{\infty, 0}$, and then extending the obtained measure by zero on $\X_0$. The fact that $\mu_{\infty}$ does not charge mass on the singularities of $\X_{0, \infty}$ immediately implies that the measures $\mu_t$ converge weakly on $\X$ to $h^*\mu_0$.
\\We will now compute the limit of the $\mu_t$'s on the hybrid space $\X^{\text{hyb}}$. Using \cite[lem. 3.6]{BJ} and a partition of unity argument, it is now enough to compute, for any $\U \subset \X$ open chart, the weak limit of the pushed-forward measures $(\Log_{\U})_* \mu_t$ when $t \rightarrow 0$. This is the purpose of the following lemma:
\begin{lem}
Let $\U \subset \X$ be an open subset such that $\U \cap \X_0 \subset \mathring{D}'_i$ for $i \in I$, and $\eta \in \mathcal{C}^0_c(\U)$. We have :
$$ (\Log_{\U})_*(\eta \mu_t) \xrightarrow[t \rightarrow 0]{} \bigg(\int_{\U \cap \X_0} \eta \; h^* \mu_{\infty} \bigg) \delta_{v_{D_i}}.$$
\end{lem}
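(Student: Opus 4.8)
The plan is to exploit the fact that, under the hypothesis $\U \cap \X_0 \subset \mathring{D}'_i$, the chart map $\Log_\U$ degenerates to a \emph{constant}: it sends all of $\U \setminus \X_0$ onto the single vertex $v_{D_i}$. Once this is established, the pushforward $(\Log_\U)_*(\eta\mu_t)$ is automatically a Dirac mass at $v_{D_i}$, and the only thing left to verify is the convergence of its total mass, which will follow from the archimedean convergence already recorded on $\X$.

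First I would identify the relevant face. Since $\U$ meets $\X_0$ only along the smooth interior of the single component $D'_i$, the set $J$ of indices of components of $\X_0$ met by $\U$ is the singleton $\{i\}$, and the stratum is (a connected piece of) $D'_i$. The associated simplex $\sigma_\U = \{ w \in \R_{\ge 0}^{\{i\}} : m'_i w_i = 1\}$ is then $0$-dimensional, reduced to the vertex attached to $D'_i$. Next I would check that $D'_i$ has multiplicity $m'_i = 1$ in $\X_0$: as $\X_\infty$ is normal with reduced special fiber, it is regular at the generic point $\eta_{D_i}$, so the strict transform $D'_i$ defines the same divisorial valuation as $D_i$, namely $\ord_{D'_i} = \ord_{D_i}$; evaluating at $t$ gives $m'_i = \ord_{D'_i}(t) = \ord_{D_i}(t) = 1$. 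Hence the local equation of $\X_0$ on $\U$ is $f_\U = z_i^{m'_i} = z_i$, the defining formula for $\Log_\U$ reads $\Log_\U(z) = \log\lvert z_i \rvert / \log\lvert z_i \rvert = 1$ on all of $\U \setminus \X_0$, and the target vertex is $(m'_i)^{-1}\ord_{D'_i} = \ord_{D_i} = v_{D_i}$. Thus $\Log_\U \equiv v_{D_i}$, exactly; no approximation of the globalized map $\Log_V$ is needed at the level of a single adapted chart.

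With $\Log_\U$ constant, I would conclude by writing, for $t \neq 0$,
$$(\Log_\U)_*(\eta\mu_t) = \bigg( \int_{\U \cap X_t} \eta \, \mu_t \bigg)\, \delta_{v_{D_i}},$$
so that the lemma reduces to the limit of the masses $\int_{\U \cap X_t}\eta\,\mu_t$. Extending $\eta \in \mathcal{C}^0_c(\U)$ by zero yields a function in $\mathcal{C}^0_c(\X)$, and the weak convergence $\mu_t \to h^*\mu_\infty$ on $\X$ gives
$$\int_{\U \cap X_t}\eta\,\mu_t = \int_\X \eta\,\mu_t \xrightarrow[t \to 0]{} \int_\X \eta\, h^*\mu_\infty = \int_{\U \cap \X_0}\eta\, h^*\mu_\infty,$$
the last equality holding because $h^*\mu_\infty$ is carried by $\X_0$. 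This is exactly the asserted weak limit.

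The only genuinely substantive step is the identification of $\sigma_\U$ with the vertex $v_{D_i}$, i.e.\ the multiplicity-one fact $m'_i = 1$; it is what guarantees that the face does not collapse to an interior point of a higher-dimensional cell. Beyond this I expect no difficulty: the pushforward lives on a single point, so \emph{weak convergence there is just convergence of masses}, and the mass limit is a direct application of the archimedean convergence on $\X$ tested against a compactly supported continuous function.
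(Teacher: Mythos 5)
Your proof is correct and takes essentially the same route as the paper: since $\U$ meets $\X_0$ only inside $\mathring{D}'_i$, the chart map $\Log_{\U}$ collapses $\U \setminus \X_0$ onto the single vertex $v_{D_i}$, so $(\Log_{\U})_*(\eta\mu_t)$ is a Dirac mass at $v_{D_i}$ whose total mass $\int_{U_t}\eta\,\mu_t$ converges by the weak convergence $\mu_t \to h^*\mu_{\infty}$ on $\X$ established just before the lemma. The only difference is that you spell out the multiplicity-one verification $m'_i = \ord_{D'_i}(t) = \ord_{D_i}(t) = 1$, identifying the $0$-dimensional face $\sigma_{\U}$ with the valuation $v_{D_i}$ itself, a point the paper leaves implicit.
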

\begin{proof}
If $\U \cap \X_0 \subset \mathring{D}'_i$, then the map $\Log_{\U}$ contracts $\U \backslash \X_0$ to $v_{D_i}$, and the measure $(\Log_{\U})_*(\eta \mu_t)$ is $\bigg( \int_{U_t} \eta \mu_t \bigg) \delta_{v_{D_i}}$, so that the above statement is a simple reformulation of the weak convergence of the $\mu_t$'s on $\X$.
\end{proof}
This allows us to deduce the following:
\begin{prop}
Let $\X \fl \de$ be an snc model of $X$, dominating $\X_{\infty}$.
\\Then the family of measures $(\mu_t)_{t \in \de^*}$ on $\X^{\emph{hyb}}$ converge weakly to the atomic measure: 
$$\mu_{0} = \sum_{i \in I} (\int_{D_i} \mu_{\infty}) \delta_{v_{D_i}}.$$
\end{prop}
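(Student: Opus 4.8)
The plan is to assemble the preceding lemma over a partition of unity and to control the mass that escapes the central fibre. Testing against an arbitrary $\eta \in \mathcal{C}^0(\X^{\text{hyb}})$, I first record that both sides are probability measures: the $\mu_t$ have unit mass by hypothesis, and the candidate limit has total mass $\sum_{i \in I}\int_{D_i}\mu_\infty = \mu_\infty(\X_{\infty,0}) = 1$ since $\mu_\infty$ is a weak limit of probability measures supported on $\X_{\infty,0}$. By the reduction already carried out via \cite[lem. 3.6]{BJ}, it suffices to compute the weak limit of the pushforwards $(\Log_\U)_*\mu_t$ on each adapted chart $\U$ of a neighbourhood $V$ of $\X_0$, and to sum the contributions.

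Next I would dispose of the mass lying away from the central fibre. Since $\mu_t \to h^*\mu_\infty$ weakly on $\X$ and $\mu_\infty$ is supported on $\X_0 \subset V$, lower semicontinuity of mass on the open set $V$ gives $\liminf_t \mu_t(V) \ge h^*\mu_\infty(V) = 1$, whence $\mu_t(\X \backslash V) \to 0$. Thus the limiting measure is entirely concentrated on the dual complex $\cD(\X_0) = \pi^{-1}(0)$, and only the behaviour of $\mu_t$ near $\X_0$ contributes.

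I would then choose a finite partition of unity $\{\rho_\alpha\}$ subordinate to a cover of $V$ by adapted charts $\U_\alpha$ and split $\int_{X_t}\eta\,d\mu_t = \sum_\alpha \int_{X_t}\rho_\alpha\,\eta\,d\mu_t$. The charts fall into two types. For a chart with $\U_\alpha \cap \X_0 \subset \mathring{D}'_i$, the map $\Log_{\U_\alpha}$ contracts $\U_\alpha \backslash \X_0$ to the single vertex $v_{D_i}$; using $\Log_V = \Log_{\U_\alpha} + o(1)$ from \cite[prop. 2.1]{BJ} together with the continuity of $\eta$ on $\X^{\text{hyb}}$, I may replace $\eta$ by its value $\eta(v_{D_i})$ up to an error tending to $0$, and the preceding lemma (applied with test function $\rho_\alpha$) yields $\int_{X_t}\rho_\alpha\,d\mu_t \to \int_{\U_\alpha \cap \X_0}\rho_\alpha\,h^*\mu_\infty$. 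Summing over all such $\alpha$ and using $\sum_\alpha\rho_\alpha \equiv 1$ near $\X_0$ produces the weight $\int_{\mathring{D}'_i}h^*\mu_\infty = \int_{D_i}\mu_\infty$ at $v_{D_i}$. Because $h^*\mu_\infty$ is, by construction, the extension by zero of $\mu_\infty$ off the smooth locus, it is supported on $\bigsqcup_i \mathring{D}'_i$; hence the charts of the second type --- those meeting the intersection strata or the $h$-exceptional components of $\X_0$ --- carry no limiting mass and contribute nothing.

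The step I expect to be the main obstacle is the treatment of the charts straddling the interface between $\mathring{D}'_i$ and the non-smooth locus of $\X_0$, where the preceding lemma does not directly apply. Handling these requires that $h^*\mu_\infty$ give arbitrarily small mass to a shrinking neighbourhood of $\partial\mathring{D}'_i$, which is precisely what the hypothesis that $\mu_\infty$ does not charge $\Sing(\X_{\infty,0})$ provides; one then makes the partition of unity fine near these strata so that their total contribution is controlled by this vanishing mass. A secondary technical point is the uniformity of the replacement of $\eta$ by the constant $\eta(v_{D_i})$ across each chart, which rests on the uniform-on-compacts error estimate of \cite[prop. 2.1]{BJ} and on the uniform boundedness of the masses $\mu_t(\U_\alpha)$.
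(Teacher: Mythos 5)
Your proposal is correct, and its first half --- the reduction via \cite[lem. 3.6]{BJ} and a partition of unity, together with the preceding lemma on charts $\U$ with $\U \cap \X_0 \subset \mathring{D}'_i$ --- is exactly the paper's. Where you diverge is in how the residual mass is killed. You treat the ``bad'' charts head-on: shrink their supports around the singular strata and the $h$-exceptional components, and use the weak convergence $\mu_t \to h^*\mu_{\infty}$ on $\X$ together with the fact that $h^*\mu_{\infty}$ charges none of these closed sets, to make their total contribution arbitrarily small. The paper sidesteps this step (the one you single out as the main obstacle) entirely, by a soft compactness-and-mass argument: since the part of $\X^{\text{hyb}}$ lying over a closed sub-disk is compact, any sequence $t_j \to 0$ admits a subsequence along which $\mu_{t_j}$ converges weakly to some limit $\mu'$; the lemma, applied to charts exhausting each $\mathring{D}'_i$, already forces the one-sided bound $\mu' \ge \sum_{i \in I} (\int_{D_i}\mu_{\infty})\,\delta_{v_{D_i}}$; and since both sides are positive measures with the same total mass (your own computation, which uses that $\mu_{\infty}$ does not charge the pairwise intersections $D_i \cap D_j$), the inequality must be an equality, for every subsequence --- which is the claimed convergence. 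So the paper never needs fine partitions, the uniform replacement of $\eta$ by $\eta(v_{D_i})$, or any control of the straddling charts. Your route costs the $\epsilon$-management you describe, but in exchange it proves directly, and more quantitatively, that no mass accumulates on the higher-dimensional faces of $\cD(\X_0)$ or at the exceptional vertices; the paper's route is shorter but purely soft, relying on compactness to produce limit points and on mass conservation to identify them.
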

This now implies that for any sequence $(t_j)_{j \in \N}$ going to $0$, which we may assume after extraction such that the sequence of measures $(\mu_{t_j})$ converges weakly to a limit $\mu'$, the inequality $\mu' \ge \sum_{i \in I} (\int_{D_i} \mu_{\infty}) \delta_{v_{D_i}}$ holds. These two measures having same total mass, we necessarily have equality $\mu' =\sum_{i \in I} (\int_{D_i}\mu_{\infty}) \delta_{v_{D_i}}$, for any sequence $(t_j)_{j \in \N}$, which concludes.
\begin{rem}
In the case where $\X_{\infty}$ is the canonical model of a degeneration of Kähler-Einstein manifolds, and is furthermore simple normal crossing (or more generally, toroidal), it is actually possible to prove directly that if $\U \subset \X$ is an open adapted chart in the sense of section 1, such that $\U$ meets a strata of $\X_0$ of codimension $>1$, then the measures $(\Log_{\U})_*(\eta \mu_t)$ converge weakly to $0$, using the estimates from \cite[prop. 3.1, lem. 3.2]{Zh}.
\end{rem}
\section{Applications and remarks}
\subsection{Continuity of solutions to the Monge-Ampère equations}
In this subsection, $X \fl \de^*$ is a degeneration of negatively curved Kähler-Einstein manifolds.
\\The Kähler-Einstein measures on $X_t$ tautologically satisfy the complex Monge-Ampère equation : $\omega_t^n = \mu_t$, whose right-hand side converges in the hybrid sense to the non-archimedean Monge-Ampère measure $\mu_0 = \MA(\phi_{K_{\X}})$. We deduce from this that if hybrid convergence of measures did imply strong convergence of solutions to the respective Monge-Ampère equation, then the sup-normalized Kähler-Einstein potentials $\psi'_t = \psi_t - \sup_{X_t} \psi_t$ would extend continuously to $X^{\text{hyb}}$, by setting $\Psi \equiv 0$ on $X_0^{\text{hyb}}$.
\\However proposition 2.1 implies that $\sup_{X_t} \psi_t$ is uniformly bounded from above, while $\inf_{X_t} \psi_t$ blows-up as $t \rightarrow 0$, which contradicts strong convergence inside of $X^{\text{hyb}}$.
\\We expect this phenomenon to be due to the fact that the archimedean behaviour of the metrics $\omega_t$ is too "nice" : viewing the family $(\omega_t)_{t \in \de^*}$ as a metric $\psi$ on the relative canonical bundle $K_{X / \de^*}$, it follows for instance from the classical work of Fujiki-Schumacher \cite{FS} that the curvature form $\Omega$ of $\psi$ - which satisfies $\Omega_{| X_t} = \omega_t$ - is semi-positive on $X$, that is, also along the horizontal direction. This in particular implies that the form $\Omega$ extends uniquely as a semi-positive $(1,1)$-current on any simple normal crossing model $\X$ of $X$. One can then attach to $\Omega$ a classical invariant called the Lelong number, in order to measure its singularities:
\begin{defn}
Let $\Omega$ be a semi-positive $(1,1)$-current on a smooth complex manifold $Z$, $D$ be a smooth prime divisor on $Z$, and $p$ a point of $D$.
\\If $z$ a local equation for $D$ at $p$, and $\phi$ a local potential near $p$ such that $\Omega = dd^c \phi$, then the Lelong number $\nu_D(\Omega, p)$ of $\Omega$ along $D$ is defined by the following formula:
$$ \nu_D( \Omega, p) = \max \{ a > 0 / \phi - a \log \lvert z \rvert \; \emph{is bounded from above} \}.$$
\end{defn}
The estimate of lemma 2.1 on the singularities of $\omega_{KE}$ near the nodes now implies that $\Omega$ has zero Lelong number along any irreducible component of the central fiber of a simple normal crossing model of $X$. However, the philosophy of \cite{BFJ0} suggests that all the non-archimedean data attached to the asymptotic behaviour of $\Omega$ at $t=0$ should be captured by the collection of those Lelong numbers along vertical divisors of simple normal crossing models of $X$, which would explain why the hybrid space $X^{\text{hyb}}$ is not able to see the difference between the Kähler-Einstein metrics $\omega_t$ and the reference Fubini-Study metrics $\theta_t$ - and similarly with the corresponding hermitian metrics on $K_X$.
\\Note however that we could expect a weaker form of convergence: is the 'hybrid' metric (in the sense of \cite{Fav}) induced on $X^{\text{hyb}}$ by the Kähler-Einstein metrics on $X$ and the canonical metric on $X^{\text{an}}$ semi-positive - even if not continuous ?
\\The question of strong convergence of solutions of Monge-Ampère equations on the hybrid space is mainly of interest in the Calabi-Yau case, where the behaviour of the metric is yet not understood in dimension $>2$. Given the above remarks, it is natural to assume that the degeneration is as "bad" as possible - that is, that the dimension of the essential skeleton $\Sk(X)$ is maximal. Those are the \emph{maximal degenerations} of Calabi-Yau manifolds, where the fibers $X_t$ approach the \emph{large complex structure limit} in the jargon of mirror symmetry. This also guarantees that the combinatorial information carried by $\Sk(X)$ is as rich as possible. 
\begin{qu}
Let $X \xrightarrow{\pi} \de^*$ a maximal degeneration of Calabi-Yau manifolds, polarized by a relatively ample line bundle $L$.
\\We denote by $\omega_t$ the unique Ricci-flat Kähler metric $\omega_t \in c_1(L_t)$, which is the curvature form of the hermitian metric $\phi_t$ on $L_t$, and by $\phi$ the unique metric solution to the non-archimedean Monge-Ampère equation provided by theorem 1.1:
$$\MA(\phi)= d\lambda,$$
$d\lambda$ being the Lebesgue measure on the essential skeleton $\emph{Sk}(X)$ of $X$, normalized so that $\int_{\emph{Sk}(X)} d\lambda= L^n$.
Is it true that the family of hermitian metrics $(\phi_t)_{t \in \de^*}$ on $L$ converges in the strong sense to $\phi$ ?
\end{qu}
\subsection{The Fano case}
Now let us discuss the case where $X \xrightarrow{\pi} \de^*$ is an algebraic degeneration of Fano manifolds.
\\In this case, the existence of a Kähler-Einstein metric on every fiber does \emph{not} hold in general, contrasting with the Calabi-Yau and canonically polarized cases. 
\\There are several obstructions to the existence of a Kähler-Einstein metric on a smooth Fano manifold $Z$, going back at least to Lichnerowicz and Matsushima, see for instance \cite{Fut}. Those "classical" obstructions all come from the automorphism group of the manifold $Z$, which must for instance be reductive in order for a Kähler-Einstein metric to possibly exist.
\\Further developments in the subject in the 90's have brought people to believe that the existence of a Kähler-Einstein metric on $Z$ is equivalent to a sophisticated form of GIT (Geometric Invariant Theory) stability for the polarized variety $(Z, -K_Z)$. The correct notion is the one of \emph{K-stability}.
\\Although we will not give a precise definition of K-stability here, it is worth emphasizing that it is a purely algebro-geometric condition on $(Z, -K_Z)$, which is furthermore Zariski open on the base of any holomorphic family of Fano manifolds. It is furthermore equivalent to the condition that $Z$ has finite automorphism group, in addition to being K-polystable, so that K-polystability should be thought as an extension of K-stability to manifolds with continuous families of automorphisms.
By \cite{CDS}, the K-polystability of $(Z, -K_Z)$ is now equivalent to the existence, of a Kähler-Einstein of positive curvature $\omega \in -c_1(K_{Z})$, satisfying the equation $\Ric(\omega)  = \omega$. This metric is furthermore unique up to $\Aut^0(Z)$, the connected component of the automorphism group of $Z$ containing the identity.
\\Thus, let $X \fl \de^*$ be from now on a degeneration of K-polystable Fano manifolds, hence Kähler-Einstein. In this case, positivity of the Ricci curvature and the classical Bonnet-Myers theorem ensure that the diameters of $(X_t, g_{\omega_t})$ satisfy a uniform upper bound, while the Bishop-Gromov inequality imply a uniform non-collapsing condition on the $(X_t, g_{\omega_t})$: for all small radius $r>0$ and $p \in X_t$, we have $\Vol_{g_t} B(p, r) \ge c r^{2n}$, $c>0$ being a uniform constant. 
\\This uniform non-collapsing condition is used in \cite{DS} to prove that the metric behaviour of the $X_t$ is rather nice : the limiting object is a mildly singular Fano variety, which also admits a singular Kähler-Einstein metric in the sense of \cite{EGZ}, and the Gromov-Hausdorff limit is the homeomorphic to the limit in the sense of algebraic geometry (i.e. in the relevant Hilbert scheme).
\\However, the unicity of a model $\X/ \de$ of $X$ whose central fiber matches the Gromov-Hausdorff limit of the $X_t$ no longer holds, as shown in the following example.
\begin{ex}
Consider $X = \CP^1 \times \de^*$, and $\X = \CP^1 \times \de$. We blow-up a closed point of $\X_0$ to obtain an snc model of $X$ whose central fiber is a chain of two $(-1)$-curves, and we contract the strict transform of $\X_0$ in this model, which yields a new snc model $\X'_0$ with central fiber $\CP^1$, which is not isomorphic to $\X$.
\end{ex}
In this case, even though the model $\X'$ has the 'right' central fiber, we see that the intrinsic behaviour of the metrics inside $\X'$ is rather badly behaved; the metric convergence is only extrinsic. This issue is due to the presence of a large automorphism group of the metrics, which makes the situation less rigid than in the negatively curved case. 
\\Even if we assume that the $X_t$'s have discrete autmorphism group for all $t \in \de^*$, it may very well happen that a model as above could be such that its irreducible central fiber $\X_0$ has non-trivial $\Aut^0(\X_0)$ and be non-unique, which would prevent us from determining on which one we could expect Cheeger-Gromov convergence.
\\However, it is proven in \cite{BX} that if there exists a model $\X/ \de$ whose irreducible central fiber is a K-stable $\Q$-Fano manifold, then such a model is actually unique - and so does the associated divisorial valuation $v = v_{\X_0}$. We then expect that the following holds: the Kähler-Einstein measures $(\mu_t)_{t \in \de^*}$ converge weakly on the hybrid space to the following:
$$\mu_0 = (-K_{X_t})^n \delta_{v}.$$
At the moment, we still lack uniform estimates on $\X$ on the Kähler-Einstein potentials with respect to a smooth reference family of metrics that would enable us to prove this, so that we leave this question aside for future work.
\bibliographystyle{alpha}
\bibliography{CVhybride.bbl}
\end{document}